\newtheorem{theorem}{Theorem}[section]
\newtheorem{lemma}{Lemma}[section]
\newtheorem{prop}{Proposition}[section]
\newtheorem{claim}{Claim}[section]
\newtheorem{definition}{Definition}[section]
\newtheorem{corollary}{Corollary}[section]
\newtheorem{remark}{Remark}[section]
\newtheorem{problem}{Problem}[section]
\newcommand{\ZZ}{\mathbb{Z}}
\newcommand{\QQ}{\mathbb{Q}}
\newcommand{\RR}{\mathbb{R}}
\newcommand{\CC}{\mathbb{C}}
\newcommand{\NS}[1]{\mathrm{NS}(#1)}
\newcommand{\NSQ}[1]{\mathrm{NS}_{\QQ}(#1)}
\newcommand{\NSR}[1]{\mathrm{NS}_{\RR}(#1)}
\newcommand{\MBM}[1]{\mathrm{MBM}(#1)}
\newcommand{\NEF}[1]{\mathrm{Nef}(#1)}
\newcommand{\POSNS}[1]{\mathscr{C}_{\mathrm{NS}}{(#1)}}
\newcommand{\BPOSNS}[1]{\partial \overline{\mathscr{C}_{\mathrm{NS}}(#1)}}
\newcommand{\MON}[1]{\operatorname{Mon}(#1)}
\newcommand{\MONHDG}[1]{\operatorname{Mon}^{\mathrm{Hdg}}(#1)}
\newcommand{\KEH}[1]{\mathscr{K}(#1)}
\newcommand{\POS}[1]{\mathscr{C}(#1)}
\newcommand{\LL}{\Lambda}
\newcommand{\BLL}{\overline{\Lambda}}
\newcommand{\LLC}{\Lambda_{\mathbb{C}}}
\newcommand{\LR}{\Lambda_{\mathbb{R}}}
\newcommand{\BLLR}{\overline{\Lambda}_{\mathbb{R}}}
\newcommand{\WR}{W_{\mathbb{R}}}
\newcommand{\RC}[1]{\mathscr{N}(#1)}
\newcommand{\PE}[1]{\mathrm{Eff}(#1)}
\newcommand{\AMP}[1]{\mathrm{Amp}(#1)}
\newcommand{\AUT}[1]{\operatorname{Aut}(#1)}
\newcommand{\GR}[1]{\operatorname{Gr}_{++}(#1)}
\newcommand{\RK}[1]{\operatorname{rank}(#1)}
\newcommand{\XF}{\mathscr{X}}
\newcommand{\LF}{\mathscr{L}}
\newcommand{\DEF}[1]{\mathrm{Def}(#1)}
\newcommand{\MF}[1]{\mathfrak{M}_{#1}}
\newcommand{\PF}{\mathscr{P}}
\newcommand{\PP}[1]{\mathbb{P}(#1)}
\newcommand{\PD}[1]{\Omega_{#1}}
\newcommand{\BG}{\bar{\Gamma}}
\title{On subgroups of an automorphism group of an irreducible symplectic manifold}
\author{Daisuke MATSUSHITA}
\address{Division of Mathematics, Graduate School of Science,
         Hokkaido University,  Sapporo, 060-0810 Japan}
\thanks{* Partially supported by Grand-in-Aid \# 18684001
 (Japan Society for Promortion of Sciences).} 
\email{matusita@math.sci.hokudai.ac.jp}
\begin{document}
\maketitle

\begin{abstract}
 Let $X$ be an irreducible symplectic manifold and $L$ a nef line bundle on $X$ which
 is isotropic with respect to the Beauville-Bogomolov quadratic form. It is known that
 a subgroup $\AUT{X,L}$ of an automorphism group of $X$ which fix $L$ is almost abelian. 
 We give a formula of the rank of $\AUT{X,L}$ in terms of MBM divisors. We also
 prove that the nef cone of $X$ cut out MBM classes, which is a generalization
 of Kovac's structure theorem of nef cones of $K3$ surfaces.
\end{abstract}

%%%_.
\section{Introduction}

We start with recalling definitions of a Neron-Severi group, an ample cone and  a nef cone.
\begin{definition}
 Let $X$ be a compact K\"ahler manifold. A Neron-Severi group $\NS{X}$ is 
 a subgroup of $H^2 (X,\ZZ)$ defined by
$$
 \NS{X} := H^{1,1}(X,\RR) \cap H^2 (X,\ZZ).
$$
 We denote by $\NSR{X}$ the $\RR$-vector space generated by $\NS{X}$.
 An ample cone $\AMP{X}$ of $X$ is the cone in $\NSR{X}$ defined by
$$
 \AMP{X} := \NSR{X} \cap \KEH{X},
$$
 where $\KEH{X}$ is the K\"ahler cone of $X$. 
 The closure of $\AMP{X}$ in $\NSR{X}$ is said to be nef cone and
 denoted by $\NEF{X}$.
\end{definition}
 We also recall the definition of 
 irreducible symplectic manifolds.
\begin{definition}
 A compact K\"ahler manifold $X$ is said to be irreducible symplectic
 if $X$ has the following three properties:
\begin{itemize}
 \item[(1)] $X$ is simply connected;
 \item[(2)] $X$ carries a holomorphic symplectic form and;
 \item[(3)] $\dim H^0 (X,\Omega_X^2) = 1$.
\end{itemize}
\end{definition}
 A $K3$ surface has above three properties and gives a plain example of 
 irreducible symplectic manifolds. It is expected that $K3$ surfaces
 and irreducible symplectic manifolds share many geometric properties.
 One of the biggest geometric features of $K3$ surfaces
 is Global Torelli theorem, 
 which was obtained in \cite{MR0447635}, \cite{MR596874} and \cite{MR0284440}.
\begin{theorem}[Global Torelli Theorem for projective $K3$ surfaces]
 Let $X$ and $X'$ be projective $K3$ surfaces. Assume that
 there exists an isometry  $\phi : H^2 (X',\ZZ) \to H^2 (X,\ZZ)$ with
 respect to the cup products. If $\phi $ 
 respects the Hodge structure and 
  $\phi (\AMP{X'})\cap \AMP{X} \ne \emptyset$,
 there exists an automorphism $\Phi$
 such that the induced morphism $\Phi^*$ on cohomologies
 coincides with $\phi$.
\end{theorem}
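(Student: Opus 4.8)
The plan is to deduce the effective (strong) statement from the weaker assertion that Hodge-isometric projective $K3$ surfaces are abstractly isomorphic, using the chamber structure of the positive cone. First I would recall that inside $\{x\in\NSR{X} : (x,x)>0\}$ the ample cone $\AMP{X}$ is one of the connected components of the complement of the union of the hyperplanes $\delta^{\perp}$, where $\delta$ ranges over the $(-2)$-classes in $\NS{X}$; the group $W$ generated by the reflections $s_{\delta}(x)=x+(x,\delta)\delta$, together with $\pm\mathrm{id}$, acts simply transitively on the resulting chambers. Since any Hodge isometry permutes the $(-2)$-classes and preserves the quadratic form, it carries $\AMP{X'}$ onto some chamber, and the hypothesis $\phi(\AMP{X'})\cap\AMP{X}\ne\emptyset$ forces $\phi(\AMP{X'})=\AMP{X}$; that is, $\phi$ is effective.

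Next I would apply the weak form to produce \emph{some} isomorphism $g\colon X\to X'$. Its pullback $g^{*}$ is a Hodge isometry carrying $\AMP{X'}$ onto $\AMP{X}$, because the pullback of an ample class along an isomorphism is ample, so $\psi:=\phi\circ(g^{*})^{-1}$ is an auto-Hodge-isometry of $H^{2}(X,\ZZ)$ fixing $\AMP{X}$. Everything then reduces to the assertion that an effective auto-Hodge-isometry is of the form $f^{*}$ for some $f\in\AUT{X}$: granting this, $\phi=\psi\circ g^{*}=(g\circ f)^{*}$, and $\Phi:=g\circ f$ is the desired morphism with $\Phi^{*}=\phi$.

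It remains to establish the two geometric inputs, the weak Torelli theorem and the realization of effective auto-Hodge-isometries by automorphisms, and for both I would argue by degeneration to Kummer surfaces. I would first prove the statement directly for a Kummer surface $X=\mathrm{Km}(A)$, where a Hodge isometry of $H^{2}$ restricts to the transcendental lattice and, after identifying the latter with that of the underlying abelian surface $A$, descends to a Hodge isometry of $A$ that is realized by an isomorphism through the classical Torelli theorem for complex tori; the effective hypothesis selects the chamber and so removes the ambiguity by $W$. Then I would invoke local Torelli, namely that the period map $\DEF{X}\to\Omega$ is a local isomorphism (for $K3$ surfaces this follows from unobstructedness and the fact that contraction with the symplectic form identifies $H^{1}(X,T_{X})$ with $H^{1}(X,\Omega_{X}^{1})$), together with the surjectivity of the period map, in order to connect an arbitrary $X$ to a Kummer surface inside a single family. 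A connectedness argument in the period domain, using that the locus where the conclusion holds is open (by local Torelli), closed (by properness and separatedness of the period map modulo monodromy), and nonempty (Kummer surfaces), propagates the result to all projective $K3$ surfaces, the projectivity hypothesis ensuring a polarization that keeps the ample cone nondegenerate throughout.

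The hard part is the global analytic input: the surjectivity of the period map, and the control of the degeneration from a general $K3$ surface to a Kummer surface. Establishing surjectivity rigorously requires either Kulikov-type semistable degeneration theory or Todorov--Looijenga's deformation arguments, and the subsequent specialization must track the chamber structure of the positive cone along the family so that effectivity is preserved in the limit; separatedness of the moduli functor, which prevents two non-isomorphic surfaces from sharing the same effective period, is exactly what upgrades the density of the Kummer locus into the theorem. By contrast, the reduction carried out in the first two paragraphs and the linear algebra of the Weyl-group chambers are purely formal.
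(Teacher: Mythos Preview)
The paper does not prove this theorem. It is stated as a classical result with references to the original literature (\cite{MR0447635}, \cite{MR596874}, \cite{MR0284440}) and serves only as motivation for the higher-dimensional analogue (Theorem~\ref{Global_Torelli_Theorem}), which is likewise quoted from Verbitsky and Markman without proof. There is therefore no ``paper's own proof'' to compare your proposal against.

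As for the proposal itself: the overall architecture (Weyl-chamber reduction, verification for Kummer surfaces, propagation via density) is indeed in the spirit of the classical Pjatecki\u\i-\v Sapiro--\v Safarevi\v c and Burns--Rapoport arguments. Two points deserve caution. First, your reduction in the second paragraph is not a genuine simplification: the assertion that every effective auto-Hodge-isometry of $H^{2}(X,\ZZ)$ is induced by an automorphism of $X$ is exactly the strong Torelli statement in the special case $X'=X$, so you have not separated the problem into an easier piece plus a residual. Second, and more seriously, the ``open, closed, nonempty'' step is circular as written: you appeal to ``separatedness of the period map modulo monodromy'' and to ``separatedness of the moduli functor'' to get closedness, but separatedness of the moduli of (marked) $K3$ surfaces is precisely what the Torelli theorem establishes --- without it the moduli space is genuinely non-Hausdorff. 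The classical proofs avoid this by an explicit limiting argument (degenerating to a Kummer surface along a one-parameter family and controlling the limit directly), and they do not use surjectivity of the period map, which historically was proved afterwards and in part \emph{using} Torelli.
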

 A higher dimensional analogue of Global Torelli Theorem was obtained
 by Verbitsky in \cite[Theorem 1.18]{MR3161308}. 
 After introducing the definition of monodromy groups, we state 
 it in a form suitable for use in this paper 
 according to \cite[Theorem 1.3 (2)]{Markman_2011}.
\begin{definition}\label{053649_30Mar18}
Let $X$ be an irreducible symplectic manifold.
We denote by $q_X$ the Beauville-Bogomolov quadratic form on
$H^2 (X,\ZZ)$.
Let $O(H^2(X,\ZZ),q_X)$ be an isometry group with respect to $q_X$.
Let us consider smooth morphisms
$\mathscr{X} \to (B,o)$ such that $(B,o)$ is an analytic space 
with the reference point $o$ and the fibre at $o$ is isomorphic to $X$.
We note that $B$ may have any kind of singularities.
For such a smooth morphism, we have a natural 
representation $\pi_1 (B,o) \to O(H^2 (X,\ZZ))$.
A subgroup $\MON{X}$ of $O(H^2(X,\ZZ), q_X)$
is the subgroup generated by
the images of all such representations.
\end{definition}
\begin{theorem}[Global Torelli Theorem for projective 
  irreducible symplectic manifolds]\label{Global_Torelli_Theorem}
  Let $X$ be a projective irreducible symplectic manifold.
  Assume that there exists an element $\phi$ of $\MON{X}$ which
  respects the Hodge structures and
  $\phi (\AMP{X}) \cap \AMP{X} \ne \emptyset$.
  Then there exists an automorphism $\Phi$
  of $X$ such that the induced automorphism 
  $\Phi^*$ on $H^2 (X,\ZZ)$ coincides with $\phi$.
\end{theorem}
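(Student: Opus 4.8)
The plan is to reduce the statement to Verbitsky's global Torelli theorem in its Teichm\"uller-theoretic form, together with the birational-to-biregular criterion of Huybrechts; this is precisely the packaging recorded in \cite[Theorem 1.3 (2)]{Markman_2011}, so I would reconstruct the underlying argument rather than merely cite it. First I would fix a marking $\eta \colon H^2 (X,\ZZ) \xrightarrow{\sim} \Lambda$ onto the abstract lattice $\Lambda$ underlying $H^2$, and regard the pair $(X,\eta)$ as a point of the Teichm\"uller space $\mathrm{Teich}$ of marked deformations of the underlying differentiable manifold. Composing with $\phi$ gives a second marked pair $(X, \eta \circ \phi^{-1})$. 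Because $\phi \in \MON{X}$, which by Definition \ref{053649_30Mar18} is generated by the monodromy representations of smooth families over connected bases, these two marked pairs lie in one and the same connected component of $\mathrm{Teich}$; this is exactly the assertion that $\phi$ is a parallel-transport operator. Since $\phi$ respects the Hodge structure it fixes the line $H^{2,0}(X) \subset H^2 (X,\CC)$, so the two marked pairs have the same image under the period map.

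Next I would invoke Verbitsky's global Torelli theorem in the form that the period map becomes injective after passing to the Hausdorff reduction of each connected component of $\mathrm{Teich}$, and that two marked pairs get identified in this Hausdorff reduction precisely when the underlying complex manifolds are bimeromorphic. Applied to $(X,\eta)$ and $(X, \eta \circ \phi^{-1})$, which have equal period and lie in the same component, this produces a bimeromorphic self-map $g \colon X \dashrightarrow X$ whose induced action $g_*$ on $H^2 (X,\ZZ)$ equals $\phi$. At this stage $\phi$ is realized only birationally, not yet by an automorphism.

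The final and most delicate step uses the hypothesis $\phi (\AMP{X}) \cap \AMP{X} \neq \emptyset$ to upgrade $g$ to a biregular automorphism. By assumption there is an ample class $\alpha \in \AMP{X}$ with $\phi(\alpha)$ again ample; since ample classes are in particular K\"ahler, $\phi = g_*$ then sends a K\"ahler class to a K\"ahler class. A bimeromorphic map between irreducible symplectic manifolds is an isomorphism in codimension one and permutes the K\"ahler-type chambers of the positive cone, and Huybrechts' criterion states that such a $g$ extends to a biregular isomorphism exactly when $g_*$ maps some K\"ahler class to a K\"ahler class. Thus $g$ is an automorphism $\Phi$ with $\Phi^* = \phi$, as required.

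I expect the main obstacle to be the bookkeeping inside Verbitsky's theorem: the precise identification of inseparable (non-Hausdorff) points of the Teichm\"uller space with bimeromorphic models, and the verification that the Hodge isometry attached to the resulting birational self-map is literally $\phi$ and not merely conjugate to it under a relabelling of the marking. Once this identification is pinned down, the K\"ahler-cone hypothesis converts the birational model into an honest automorphism through Huybrechts' criterion without further difficulty; everything else is a routine translation between marked pairs, parallel-transport operators, and monodromy elements.
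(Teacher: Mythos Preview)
The paper does not prove this theorem at all: it is stated as background, attributed to Verbitsky \cite[Theorem 1.18]{MR3161308} and quoted in Markman's formulation \cite[Theorem 1.3 (2)]{Markman_2011}, and then used as a black box in the proofs of Theorems \ref{main_1}, \ref{main_2} and \ref{main_3}. There is therefore no ``paper's own proof'' to compare against.

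Your sketch is nonetheless a correct outline of the argument underlying the cited references: passing to the Teichm\"uller space, using that a monodromy operator keeps the two marked pairs $(X,\eta)$ and $(X,\eta\circ\phi^{-1})$ in the same connected component, applying Verbitsky's injectivity on the Hausdorff reduction to obtain a bimeromorphic self-map, and then invoking the K\"ahler/ample hypothesis to upgrade to an automorphism. The one place to be careful is exactly where you flagged it: making sure that the Hodge isometry induced by the bimeromorphic map is literally $\phi$ under the chosen marking conventions (one must track whether $g^*$ or $g_*$ is being compared to $\phi$, and that the inseparability of the two Teichm\"uller points yields the correct identification). With that bookkeeping straightened out, your argument reproduces Markman's proof, which is more than the present paper supplies.
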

By the above theorem, we will have automorphisms of 
projective irreducible symplectic
manifolds if 
we construct elements of $\MON{X}$ which satisfies the assumptions
of Theorem \ref{Global_Torelli_Theorem}.
In this note, we will construct such elements of $\MON{X}$ and 
give three applications.
The first application concerns with 
the structure of  nef cones
of projective irreducible symplectic manifolds. By \cite{MR1992275} and \cite{Boucksom2001},
we have the following structure theorem of a K\"{a}hler cone of 
an irreducible symplectic manifold.
\begin{theorem}
 Let $X$ be an irreducible symplectic manifold and
 $\RC{X}$ the set of rational curves on $X$.
 We define the positive cone 
 $\POS{X}$ in $H^{1,1}(X,\RR)$ by
$$
 \POS{X} := \{x \in H^{1,1}(X,\RR)| q_X (x) > 0, q_X (x,\kappa )> 0\}
$$
 where $q_X$ is the Beauville-Bogomolov form and $\kappa$ is a K\"ahler class.
 Then
$$
 \KEH{X} = \{ x\in \POS{X} |\forall e\in \RC{X}, x.e > 0 \}
$$
\end{theorem}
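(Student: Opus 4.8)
The plan is to establish the two inclusions separately, the nontrivial one resting on the numerical characterization of K\"ahler classes due to Demailly and Paun together with the special geometry of the positive cone of an irreducible symplectic manifold. First I would dispose of the inclusion $\KEH{X} \subseteq \{x \in \POS{X} \mid \forall e \in \RC{X},\ x.e > 0\}$. If $x$ is a K\"ahler class, then the Fujiki relation $\int_X x^{2n} = c_X\, q_X(x)^n$ (with $2n = \dim X$ and $c_X > 0$) forces $q_X(x) > 0$, and since $q_X(x,\kappa) > 0$ for a reference K\"ahler class $\kappa$, we get $x \in \POS{X}$. Moreover a K\"ahler class pairs strictly positively with every effective curve, in particular with every rational curve $e \in \RC{X}$, so $x.e > 0$. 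This direction is immediate.

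For the reverse inclusion I would invoke the numerical criterion of Demailly and Paun: the K\"ahler cone $\KEH{X}$ is one connected component of the open cone
\[
 \mathcal{P} := \Bigl\{\alpha \in H^{1,1}(X,\RR) \ \Big|\ \textstyle\int_Z \alpha^{\dim Z} > 0 \text{ for every irreducible analytic subvariety } Z \subseteq X\Bigr\}.
\]
Thus it suffices to show two things: (a) every $x \in \POS{X}$ with $x.e > 0$ for all $e \in \RC{X}$ already lies in $\mathcal{P}$, and (b) the set of such $x$ is connected and meets $\KEH{X}$, so that it is contained in the single component of $\mathcal{P}$ that is the K\"ahler cone.

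The core of (a) is to control which subvarieties $Z$ could violate $\int_Z x^{\dim Z} > 0$. Because of the Fujiki relation the top self-intersection $\int_X x^{2n} = c_X\, q_X(x)^n$ is already positive throughout $\POS{X}$, so the only possible obstructions come from proper subvarieties. Here I would use Boucksom's analysis of the divisorial Zariski decomposition on irreducible symplectic manifolds \cite{Boucksom2001}, together with Huybrechts \cite{MR1992275}, which shows that any prime divisor $D$ whose class obstructs positivity satisfies $q_X(D) < 0$ and is uniruled, hence swept out by a family of rational curves; the positivity $\int_Z x^{\dim Z} > 0$ for a $Z$ contained in such a $D$ can then be reduced, via restriction and the covering family, to the condition $x.e > 0$ for the rational curves $e$ covering $D$. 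Since this is exactly the standing hypothesis, no obstruction survives and $x \in \mathcal{P}$.

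For (b) I would observe that $\{x \in \POS{X} \mid x.e > 0\ \forall e \in \RC{X}\}$ is the intersection of the convex cone $\POS{X}$ with open half-spaces, hence convex and connected, and that it contains the K\"ahler classes; therefore it lies in a single component of $\mathcal{P}$, which must be $\KEH{X}$. Combined with (a) and the easy inclusion, this yields the equality. \emph{The main obstacle} is step (a): passing from numerical positivity against \emph{all} subvarieties, as demanded by Demailly--Paun, to positivity against \emph{rational curves} alone. This is where the symplectic hypothesis is essential, through the fact that the walls bounding the K\"ahler cone inside the positive cone are cut out precisely by uniruled, negative-square divisors and the rational curves they carry; verifying that these divisors are the sole source of obstruction, and that their positivity is detected by the covering rational curves, is the crux of the argument.
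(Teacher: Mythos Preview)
The paper does not supply its own proof of this theorem: it is quoted as a known result, with the proof outsourced to Huybrechts \cite{MR1992275} and Boucksom \cite{Boucksom2001}. So there is no in-paper argument to compare your proposal against.

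That said, your outline does not match the route taken in those references, and step~(a) contains a genuine gap. The Demailly--Paun criterion asks for $\int_Z x^{\dim Z} > 0$ over \emph{every} irreducible subvariety $Z$, in every dimension. You assert that the only possible obstructions come from uniruled prime divisors of negative square, and that positivity on such a divisor reduces to positivity on a covering rational curve; neither reduction is justified. There is no a priori reason, from the Demailly--Paun side alone, why an arbitrary subvariety of intermediate dimension could not have $\int_Z x^{\dim Z} \le 0$ while $x$ still pairs positively with all rational curves, and the passage from $\int_D x^{2n-1} > 0$ to $x\cdot e > 0$ for a curve $e$ in $D$ is not a formal consequence of uniruledness. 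The symplectic hypothesis is indeed what makes the theorem work, but it enters in a different way.

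In Huybrechts' actual argument one does not attempt to verify the Demailly--Paun positivity directly. Instead, one takes a class $\alpha$ on the boundary of $\KEH{X}$ but in the interior of $\POS{X}$, represents it by a closed positive current, and uses the hyperk\"ahler twistor deformation to analyse the non-K\"ahler locus: the upshot is the existence of a rational curve $C$ with $\alpha\cdot C = 0$. Boucksom's contribution is a refinement via the divisorial Zariski decomposition that sharpens the description of the obstructing locus. If you want your approach to go through, you would need to reproduce this analysis to justify that rational curves detect all the walls---which is essentially the content of the cited theorems, not a consequence of Demailly--Paun.
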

 We denote by $\POSNS{X}$ the intersection of $\NSR{X}$ and $\POS{X}$.
 A nef cone $\NEF{X}$ of an irreducible symplectic manifold $X$
 can be described as follows:
$$
 \NEF{X} = \{
     x \in \overline{\POSNS{X}} | \forall e \in \RC{X}, x.e \geq 0
           \}
$$
 where  $\overline{\phantom{X}}$ stands for the closure in $\NSR{X}$.
 On the other hand, Kovacs gave a description of an effective cone 
 of a $K3$ surface in \cite{MR1314742}, whose dual cone 
 with respect to the cup product is a nef cone. 
\begin{theorem}[ { \cite[Corollary 1]{MR1314742} } ]
 Let $X$ be a projective $K3$ surface whose Picard number
 is greater than two. We denote by $\mathscr{N}(X)$ 
 the set of $(-2)$-curves on $X$.
 If $\mathscr{N}(X) = \emptyset$, then
 an effective cone $\PE{X}$ of $X$ coincides with $\overline{\POSNS{X}}$.
 If $\mathscr{N}(X) \ne \emptyset$, then
$$
 \PE{X} =
 \overline{\sum_{e \in \mathscr{N}(X)} \RR_+ e} .
$$
\end{theorem}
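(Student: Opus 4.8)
The plan is to prove the two inclusions in each case, with all the real difficulty concentrated in a single geometric claim: that once a $(-2)$-curve exists and $\rho := \operatorname{rank}\NS{X} > 2$, the closed positive cone $\overline{\POSNS{X}}$ is already swallowed by the cone spanned by $(-2)$-curves. First I would record two elementary inputs. By Riemann--Roch and Serre duality, any $D \in \NS{X}$ satisfies $h^0(D) + h^0(-D) \geq \chi(\OO_X(D)) = 2 + \tfrac12 q_X(D)$, so whenever $q_X(D) \geq -2$ either $D$ or $-D$ is effective; consequently every integral class in $\POSNS{X}$ (where $q_X > 0$ and the pairing with a K\"ahler class is positive) is effective, and since $\PE{X}$ is closed while $\POSNS{X}$ has dense rational rays, $\overline{\POSNS{X}} \subseteq \PE{X}$. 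Second, by adjunction an irreducible curve $C$ satisfies $q_X(C) = 2p_a(C) - 2 \geq -2$, with equality exactly for $(-2)$-curves; hence each irreducible curve is either a $(-2)$-curve or has $q_X(C) \geq 0$, in which case $C \in \overline{\POSNS{X}}$.

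These two facts reduce both cases to one claim. If $\mathscr{N}(X) = \emptyset$, every irreducible curve lies in the convex cone $\overline{\POSNS{X}}$, so summing over components gives $\PE{X} \subseteq \overline{\POSNS{X}}$, and with the input above this yields equality. If $\mathscr{N}(X) \neq \emptyset$, write $K := \overline{\sum_{e \in \mathscr{N}(X)} \RR_+ e}$; then $K \subseteq \PE{X}$ is immediate, and decomposing an effective divisor into its irreducible components (each a $(-2)$-curve, hence in $K$, or of nonnegative square, hence in $\overline{\POSNS{X}}$) shows $\PE{X} \subseteq \overline{\POSNS{X}} + K$. Everything therefore hinges on proving $\overline{\POSNS{X}} \subseteq K$, after which $\PE{X} \subseteq K + K = K$ and the theorem follows.

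To attack $\overline{\POSNS{X}} \subseteq K$ I would pass to dual cones. Since $\overline{\POSNS{X}}$ is self-dual for $q_X$ (signature $(1,\rho-1)$ forces $q_X(a,b) \geq 0$ for $a,b$ in the closed positive cone), the claim is equivalent to $K^{\vee} = \{x : q_X(x,e) \geq 0\ \forall e \in \mathscr{N}(X)\} \subseteq \overline{\POSNS{X}}$, that is, to the assertion that being nonnegative against every $(-2)$-curve already forces a class into the closed positive cone. Alternatively, and more usefully for computation, since $\overline{\POSNS{X}}$ is a round cone of dimension $\rho \geq 3$ and hence equals the closed convex hull of its null rays, it suffices to show that every ray $\RR_+ x$ with $q_X(x) = 0$ on the positive side lies in $K$; and for that it is enough that the rays of $(-2)$-curves be dense among the null rays of $\overline{\POSNS{X}}$.

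This density statement is exactly where the hypothesis $\rho > 2$ is used, and I expect it to be the main obstacle. The mechanism is that the Weyl group generated by the reflections $s_e$, $e \in \mathscr{N}(X)$, acts on $\overline{\POSNS{X}}$, and for signature $(1,\rho-1)$ with $\rho - 1 \geq 2$ the induced action on the boundary sphere has dense orbits, so starting from a single $(-2)$-curve one generates $(-2)$-classes whose rays accumulate on every null ray; for $\rho = 2$ the boundary consists of only two rays and no such density is available, which is why the theorem excludes that case. The delicate point is effectivity: a $(-2)$-class $\delta$ with $q_X(\delta,\kappa) > 0$ is effective by Riemann--Roch, but one must verify that the contributions of such classes near a prescribed null ray are genuine $(-2)$-curves, not components of nonnegative square, so that the approximating classes truly lie in $K$. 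Granting the density of $(-2)$-curve rays, the round cone $\overline{\POSNS{X}}$ is their closed convex hull and hence contained in $K$, completing the case $\mathscr{N}(X) \neq \emptyset$ and with it the theorem.
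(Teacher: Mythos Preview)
The paper does not prove this statement at all: it is quoted verbatim as Kov\'acs' result \cite[Corollary~1]{MR1314742} and used only as motivation for Theorem~\ref{main_1}. There is therefore no proof in the paper to compare your proposal against.

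That said, your outline is close in spirit to Kov\'acs' original argument, and the reductions in your first two paragraphs are correct and standard. The genuine gap is exactly where you flag it: the density claim for $(-2)$-curve rays on the null boundary. Two points would need to be made precise before this step goes through. First, the Weyl-group argument you sketch produces $(-2)$-\emph{classes}, not $(-2)$-\emph{curves}; an effective $(-2)$-class can decompose as a sum of a $(-2)$-curve and curves of nonnegative square, and you would need to control where the $(-2)$-curve component lands, not just the class itself. Second, density of an orbit on the boundary sphere requires the reflection group to be non-elementary (limit set the whole sphere), and the existence of a single $(-2)$-curve does not by itself guarantee this; one needs an argument producing infinitely many $(-2)$-curves whose reflections generate a sufficiently large group. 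Kov\'acs handles these issues by a direct lattice-theoretic argument rather than an abstract density statement, so if you want a self-contained proof you should either consult his paper or supply the missing effectivity and non-elementarity arguments explicitly.
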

 If we consider the dual statement of the above theorem, 
 we find that $\NEF{X}$ coincides with $\overline{\POSNS{X}}$ 
 if $\RC{X} = \emptyset$. If $\RC{X} \ne \emptyset$,
$$
 \NEF{X} =  \{x \in \NSR{X} | \forall e \in \mathscr{N}(X), x.e \ge 0 \} 
$$ 
 It is a natural question 
 whether a nef cone of an irreducible symplectic manifold
 has a similar structure. We give a positive answer of this question.
 To state our result, we recall monodromy birationally minimal classes, which
 is introduced in \cite[Definition 1.13]{Amerik2015}.
\begin{definition}[Monodromy Birationally Minimal Class]
Let $X$ be an irreducible symplectic manifold.
A cohomology class $e$ of $H^{1,1}(X,\RR)\cap H^2 (X,\QQ)$ is said to
be Monodromy birationally minimal if there exists an element $\gamma$ of $\MON{X}$
such that $\gamma(e)^{\perp}\cap \overline{\KEH{X}}$ is an open set of $\gamma(e)^{\perp}$.
We denote by $\MBM{X}$ the set of Monodromy birationally minimal classes of $X$.
\end{definition}
\begin{remark}
If $X$ is a $K3$ surface, then
$$
 \MBM{X} = \{ e \in H^{1,1}(X,\RR) \cap H^{2}(X,\ZZ)| \langle e,e \rangle = -2\}.
$$
\end{remark}
By the above remark, we can restate the structure of the nef cone of a $K3$ surface in the
following form. If $\MBM{X} = \emptyset$, then $\NEF{X}$ coincides with
$\overline{\POSNS{X}}$. If
$\MBM{X} \ne \emptyset$, there exists a subset $\mathscr{N}(X)$ of $\MBM{X}$ such that
$$
 \NEF{X} = \{x \in \NSR{X} | \forall e \in \mathscr{N}(X), x.e \ge 0\}
$$
Now we state the first application.
\begin{theorem}\label{main_1}
 Let $X$ be a projective irreducible symplectic manifold whose Picard number
 is greater than two. 
 If $\MBM{X} = \emptyset$, then
 $\NEF{X}$ coincides with $\overline{\POSNS{X}}$. If $\MBM{X} \ne \emptyset$,
 there exists a subset $\mathscr{N}(X)$ of $\MBM{X}$ such that
$$
 \NEF{X} = \{x \in \NSR{X} | \forall e \in \mathscr{N}(X), q_X (x,e) \ge 0\}
$$
 where $q_X$ is the Beauville-Bogomolov quadratic form.
\end{theorem}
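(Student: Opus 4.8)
The plan is to convert the rational-curve description of the nef cone, recalled before the statement, into a description by Beauville--Bogomolov-orthogonal hyperplanes, and then to recognize the bounding hyperplanes as orthogonal complements of MBM classes. Since $X$ is projective, $q_X$ is nondegenerate of signature $(1,\rho-1)$ on $\NSR{X}$, so for every rational curve $C$ there is a unique $e_C \in \NSQ{X}$ with $x.C = q_X(x,e_C)$ for all $x \in \NSR{X}$. The recalled formula then reads
$$
 \NEF{X} = \overline{\POSNS{X}} \cap \{x \in \NSR{X} \mid q_X(x,e_C) \ge 0 \text{ for all } C \in \RC{X}\}.
$$
It therefore suffices to show that the hyperplanes actually bounding this cone have the form $e^{\perp}$ with $e \in \MBM{X}$, to collect these classes (with signs chosen so that $q_X(\cdot,e)\ge 0$ on $\NEF{X}$) into a set $\mathscr{N}(X)\subset\MBM{X}$, and finally to remove the auxiliary constraint $x \in \overline{\POSNS{X}}$.

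The heart of the argument is the passage from extremal rational curves to MBM classes. I would first use the Kähler cone theorem to identify the facets of $\NEF{X}=\overline{\AMP{X}}\subseteq\NSR{X}\cap\overline{\KEH{X}}$ with the active curve constraints $q_X(\cdot,e_C)=0$, noting that only finitely many such constraints are active near any interior point; this local finiteness of walls is where the monodromy elements constructed in this paper, together with boundedness of extremal rational curves, enter. For a facet-defining curve $C$ the face $e_C^{\perp}\cap\overline{\KEH{X}}$ is then a genuine codimension-one face, hence an open subset of $e_C^{\perp}$, so taking $\gamma=\mathrm{id}$ in the definition shows that the primitive rational multiple of $e_C$ lies in $\MBM{X}$. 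Running this over all facets yields $\mathscr{N}(X)$ with $\NEF{X}\subseteq\{x\mid q_X(x,e)\ge 0 \text{ for all } e\in\mathscr{N}(X)\}$, the two cones sharing the same facets.

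To obtain equality it remains to drop the positive-cone constraint, i.e.\ to show that any $x\in\NSR{X}$ with $q_X(x,e)\ge 0$ for all $e\in\mathscr{N}(X)$ already lies in $\overline{\POSNS{X}}$. Here I would invoke the reflection/chamber structure: the monodromy isometries attached to the classes of $\mathscr{N}(X)$, whose existence is the construction of this paper and which are realized by (birational) automorphisms through Theorem \ref{Global_Torelli_Theorem}, act on $\POSNS{X}$ with $\AMP{X}$ as a fundamental chamber, and a fundamental chamber of such a group acting on the hyperbolic space associated to $\{q_X>0\}$ is cut out by its own walls and contained in the closed positive cone. The hypothesis that the Picard number exceeds two keeps this hyperbolic picture non-degenerate and excludes the rank-$\le 2$ exceptions. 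Combining the two inclusions gives the stated formula.

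Finally, if $\MBM{X}=\emptyset$ then by the middle paragraph $\overline{\KEH{X}}$ has no facet orthogonal to an MBM class, so no rational-curve constraint is active and $\KEH{X}=\POS{X}$; taking closures in $\NSR{X}$ yields $\NEF{X}=\overline{\POSNS{X}}$. I expect the main obstacle to be exactly the middle step: proving that the dual class of an extremal rational curve is genuinely an MBM class, which reduces to establishing the local finiteness and codimension-one-ness of the walls of the nef cone, and this is precisely where the monodromy construction and the Global Torelli theorem must do the real work.
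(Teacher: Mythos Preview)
Your overall two-step plan---first realize $\NEF{X}$ as $\overline{\POSNS{X}}$ intersected with half-spaces $\{q_X(\cdot,e)\ge 0\}$ for certain MBM classes $e$, then show the constraint $x\in\overline{\POSNS{X}}$ is redundant---is exactly the shape of the paper's argument. For the first step the paper does not pass through rational curves at all: it quotes the Amerik--Verbitsky chamber decomposition (\cite[Theorem 1.19]{Amerik2015}) directly, which already gives $\NEF{X}=D\cap\overline{\POSNS{X}}$ with $D=\bigcap_{e\in\mathscr{N}(X)}\{q_X(\cdot,e)\ge 0\}$ and $\mathscr{N}(X)\subset\MBM{X}$. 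Your detour via $e_C$ is harmless but unnecessary.

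The genuine gap is in your second step. You propose to remove the positive-cone constraint by viewing $\AMP{X}$ as a fundamental chamber for the group generated by ``monodromy isometries attached to the classes of $\mathscr{N}(X)$''. Two problems: first, the construction in Section~2 of the paper does \emph{not} produce reflections in MBM walls; it produces \emph{parabolic} isometries fixing a chosen isotropic class $\ell$ and a negative-definite sublattice $W\subset\ell^{\perp}$. Second, even when MBM classes do give monodromy reflections, $\AMP{X}$ is only one chamber among many in $\POSNS{X}\setminus\bigcup_{e\in\MBM{X}}e^{\perp}$, and the other chambers correspond to distinct birational models, not to translates of $\AMP{X}$ by a reflection group; so the fundamental-domain heuristic does not apply.

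The paper's actual argument for $D=\NEF{X}$ is by contradiction and uses ingredients you have not mentioned. If $D\supsetneq\NEF{X}$ then $D$ meets $\{q_X<0\}$, forcing $\NEF{X}\cap\partial\overline{\POSNS{X}}$ to contain an open subset of the boundary quadric. Lemma~\ref{Nef_cone_inclusion} (a deformation to Picard number two, using rationality of extremal rays there) then yields a \emph{rational} isotropic nef class $\ell$ with $\ell^{\perp}\cap\MBM{X}=\emptyset$. Corollary~\ref{subgroup_of_monodromy} now supplies parabolic monodromy elements $g$ fixing $\ell$; since no MBM wall passes through $\ell$, each such $g$ must preserve the chamber $\AMP{X}$, hence by Global Torelli comes from an automorphism. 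The parabolic dynamics $g^m x\to\ell$ then force $\NEF{X}=\overline{\POSNS{X}}$, contradicting $\MBM{X}\ne\emptyset$. Both Lemma~\ref{Nef_cone_inclusion} and the parabolic (not reflective) nature of the monodromy elements are essential and absent from your sketch.
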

 If $X$ is a $K3$ surface, $\RC{X}$ coincides with the set of smooth rational curves. 
 The author would like to ask the following question.
\begin{problem}
 In Theorem \ref{main_1}, does $\RC{X}$ coincide with the set of rational cohomology classes
 corresponding to smooth rational curves in $X$?  
\end{problem}
The second application concerns with the rank of a subgroup of an automorphism group 
of an irreducible symplectic manifold
which fixes  a line bundle. We recall the definition of an almost abelian group
according to  \cite{MR2296437}.
\begin{definition}\label{almost_abelian_definition}
  A group $G$ is said to be almost abelian of rank $r$ if $G$ has a normal subgroup $G^{(0)}$ 
  such that $|G : G^{(0)}| < \infty $ and 
  $G^{(0)}$ sits
  in the following exact sequence.
$$
 1 \to K \to G^{(0)} \to \ZZ^r \to 0
$$
 where $K$ is a finite group.
\end{definition}
\begin{theorem}\label{main_2}
 Let $X$ be an irreducible symplectic manifold and 
 $L$ an isotropic nef line bundle with respect to 
 Beauville-Bogomolov quadratic form.
 We define the subset $\MBM{X}^{\circ}$ of $\MBM{X}$ by
\[
  \MBM{X}^{\circ} := 
 \{
 e \in \MBM{X} | \mbox{\(e^{\perp} \cap \NEF{X} \) is an open set of  \( e^{\perp} \)}
 \}.
\]
 Let $\WR$ be a sub linear space in generated by $c_1 (L)$ and 
 $c_1 (L)^{\perp} \cap \MBM{X}^{\circ}$. 
 We denote by $\AUT{X,L}$ the subgroup of $\AUT{X}$ defined by
$$
 \AUT{X,L} := \{g \in \AUT{X} | g^*L \cong L\}
$$
 Then $\AUT{X,L}$ is almost abelian of
 rank  $\dim \NSR{X} - \dim \WR - 1$.
\end{theorem}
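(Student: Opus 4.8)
The plan is to transport the problem into the integral orthogonal group of $\NS{X}$ via the Global Torelli theorem and then to read off the answer from the hyperbolic geometry of the positive cone $\POSNS{X}$. Write $n=\dim\NSR{X}$ and $w=c_1(L)$, and recall that $q_X$ has signature $(1,n-1)$ on $\NSR{X}$, so that $\POSNS{X}$ is a model of hyperbolic $(n-1)$-space and $w$, being nonzero, isotropic and nef, is an ideal (boundary) point of it. I would begin with the standard fact that the representation $\AUT{X}\to O(H^2(X,\ZZ),q_X)$ has finite kernel, and that the induced action on the transcendental part is finite on the image of $\AUT{X,L}$; thus, up to a finite group, $\AUT{X,L}$ is identified, through its action on $\NSR{X}$, with
$$
 G := \{\phi\in\MON{X}\mid \phi\ \text{is a Hodge isometry},\ \phi(w)=w,\ \phi(\NEF{X})=\NEF{X}\}.
$$
The inclusion $\AUT{X,L}\to G$ is immediate, since $g^*c_1(L)=c_1(L)$ and automorphisms preserve $\NEF{X}$; conversely any $\phi\in G$ preserves $\AMP{X}$, so $\phi(\AMP{X})\cap\AMP{X}\neq\emptyset$ and Theorem \ref{Global_Torelli_Theorem} realises $\phi$ by an automorphism fixing $L$. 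It therefore suffices to prove that $G$ is almost abelian of rank $n-1-\dim\WR$.

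Next I would invoke the Levi decomposition of the stabiliser of the isotropic vector $w$ inside the real orthogonal group of signature $(1,n-1)$: it is isomorphic to $O(n-2)\ltimes\RR^{n-2}$, where $O(n-2)$ is the action on the negative definite space $V:=w^\perp/\langle w\rangle$ and the abelian unipotent radical $\RR^{n-2}$ consists of the Eichler transvections $E_v$ ($v\in V$). Since $G$ is a discrete subgroup of $O(\NS{X},q_X)$, its image in the compact factor $O(n-2)$ is finite, and $N:=G\cap\RR^{n-2}$ is a normal subgroup of finite index that is discrete in $\RR^{n-2}$, hence free abelian of some rank $r$. Taking $G^{(0)}=N$ exhibits $G$ as almost abelian of rank $r$ in the sense of Definition \ref{almost_abelian_definition}, and everything reduces to computing $r$.

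For the upper bound I record that the walls of $\NEF{X}$ passing through the ideal point $w$ are exactly the hyperplanes $e^\perp$ with $e\in\mathscr{N}_0:=w^\perp\cap\MBM{X}^{\circ}$, and that a transvection sends $e=aw+u$ (with $u$ the $V$-component of $e$) to $E_v(e)=e-q_X(u,v)\,w$. If some $E_v\in N$ had $q_X(u,v)\neq0$ for such a wall class $e$, then all the classes $E_v^k(e)=e-k\,q_X(u,v)\,w$ would again be walls through $w$, forcing every nef class $x$ to satisfy $q_X(x,e-k\,q_X(u,v)\,w)\geq0$ for all $k\in\ZZ$ and hence $q_X(x,w)=0$; this contradicts $\NEF{X}$ having nonempty interior. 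Consequently $v\perp u$ for every wall class, i.e. $v$ lies in the orthogonal complement in $V$ of the image $\overline{S}$ of $\langle\mathscr{N}_0\rangle$. Since $\WR=\langle w\rangle+\langle\mathscr{N}_0\rangle$ contains $w$ and maps onto $\overline{S}$ with kernel $\langle w\rangle$, we have $\dim\overline{S}=\dim\WR-1$, and since the form on $V$ is definite this gives $r\le\dim\overline{S}^{\perp}=(n-2)-\dim\overline{S}=n-1-\dim\WR$.

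The reverse inequality is the construction of enough automorphisms, and this is where I expect the real work to lie. For each integral $v\in\overline{S}^{\perp}\subseteq V$, lifted to $\NS{X}$, the transvection $E_v$ is the identity on $\NS{X}^{\perp}$ because $w,v\in\NS{X}$, hence is a Hodge isometry; moreover $q_X(u,v)=0$ for every wall class, so $E_v$ fixes each class $e\in\mathscr{N}_0$ and thus each wall through $w$. Membership $E_v\in\MON{X}$ holds once $v$ ranges over a sublattice of finite index, by Markman's description of the monodromy group \cite{Markman_2011}. The decisive geometric point is that $E_v(\AMP{X})\cap\AMP{X}\neq\emptyset$: choosing an ample class $a$ deep inside the horoball centred at $w$, the class $a$ sits at large hyperbolic distance from every wall not through $w$, while the fixed parabolic $E_v$ moves $a$ by a bounded amount; thus $E_v(a)$ remains strictly positive on the fixed walls through $w$ and, by this bounded displacement, on the remaining walls, so it stays ample. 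Theorem \ref{Global_Torelli_Theorem} then yields $g\in\AUT{X,L}$ with $g^*=E_v$, and these transvections span a subgroup of $N$ of rank $\dim\overline{S}^{\perp}=n-1-\dim\WR$, giving the matching lower bound. The main obstacles are precisely the two ingredients of this last step: verifying $E_v\in\MON{X}$, and making rigorous the bounded-displacement estimate — together with the local finiteness of the walls meeting the cusp — that keeps deep ample classes ample.
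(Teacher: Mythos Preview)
Your overall architecture matches the paper's: pass via Global Torelli to the stabiliser of $w=c_1(L)$ inside the Hodge monodromy group and compute the rank of its unipotent part. Your upper bound, showing that a transvection preserving $\NEF{X}$ must fix each wall class through $w$, is a pleasant variant of the paper's argument (which instead observes that the image $\Gamma$ of $\AUT{X,L}$ preserves the negative-definite lattice $W$ spanned by those wall classes, projects to $O(W^{\perp})$, and invokes \cite[Proposition~2.9]{MR2296437}).

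The gap is exactly where you place it, and the paper closes both holes far more cheaply than your hyperbolic sketch. For membership in $\MON{X}$, no explicit description of the monodromy group is used; the paper relies only on the fact that $\MON{X}$ has finite index in $O(H^2(X,\ZZ))$ (\cite[Theorem~7.2]{MR3161308}, \cite[Theorem~2.6]{Amerik2017}). One builds the transvections integrally in $O(\NS{X}\oplus\NS{X}^{\perp})$, acting trivially on the second summand and hence as Hodge isometries, and then a finite-index subgroup already lies in $\MON{X}$; this is packaged as Corollary~\ref{subgroup_of_monodromy}. For $E_v(\AMP{X})=\AMP{X}$, no bounded-displacement estimate or cusp-finiteness is needed at all. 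Since $E_v(w)=w$, the chamber $E_v(\AMP{X})$ again has $w$ in its closure (\cite[Theorem~1.19]{Amerik2015}, \cite[Lemma~5.17]{Markman_2011}); if it differed from $\AMP{X}$ there would be a separating MBM wall $e^{\perp}$ with $\AMP{X}\subset e^{<0}$ and $E_v(\AMP{X})\subset e^{>0}$, and taking closures forces $q_X(e,w)=0$, so $e\in w^{\perp}\cap\MBM{X}^{\circ}$. But by construction $E_v$ fixes every such $e$, hence preserves the half-space $e^{<0}\supset\AMP{X}$, contradicting $E_v(\AMP{X})\subset e^{>0}$. This purely combinatorial chamber argument replaces your horoball heuristic entirely and makes the lower bound immediate.
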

\begin{remark}
 In \cite[\S 3 (3.1)]{MR3165023}, Nikulin showes the similar formula for an automorphism
 of an elliptic $K3$ which preserves a fibration.
\end{remark}
\begin{remark}
 Let $X$ be a $K3$ surface. Assume that $X$ admits an elliptic fibration $\pi : X \to \mathbb{P}^1$.
 We denote by $L$ the pull back of the tautological bundle of $\mathbb{P}^1$.
 For a point $t$ of $\mathbb{P}^1$, we let $n_t$ be the number of irreducible components
 of the fibre at $t$. 
 In this case,
 $c_1(L)^{\perp}\cap \MBM{X}^{\circ}$ consists of irreducible components
 of reducible singular fibres of $\pi$. 
 Since cohomology classes of irreducible components of a reducible singular fibre has only one relation
 in $H^2 (X,\ZZ)$,
  $\dim \WR = 1 + \sum_{t \in \mathbb{P}^1}(n_t - 1) $
 and Shioda-Tate formula in \cite{MR0429918}, \cite{MR1202625} and \cite{MR1081832}
 asserts that the rank of Mordell-Weil group of $\pi$
 coincides with $\dim \NSR{X} - \dim \WR - 1$. Since Mordell-Weil group of $\pi$
 can be considered as a subgroup of $\AUT{X,L}$, 
 Theorem \ref{main_2} can be considered as a generalization of Shioda-Tate formula.
\end{remark}
 By Theorem \ref{main_2}, the rank of $\AUT{X,L}$ is less than or equal $\dim H^2 (X,\RR) - 2$. 
 It is a natural question whether this bound is sharp. 
 The third application is that 
 the bound is attained after deforming the pair $(X,L)$.
 \begin{definition}\label{Deformation_Equivalent}
  Let $X$ and $X'$ be  compact K\"ahler manifolds.
  We also let $L$ and $L'$ be
  line bundles on $X$ and $X'$, respectively.
  Two pairs $(X,L)$ and $(X',L')$ are deformation equivalent
  if there exists a smooth morphism $\pi : \XF \to B$ over an analytic space $B$
  and a line bundle $\LF$ on $\XF$
  which has the following two properties:
\begin{itemize}
 \item[(1)] There exist two points $p$ and $p'$ of $B$ such that $\iota : \pi^{-1}(p) \cong X$
	    and $\iota' : \pi^{-1}(p')\cong X'$, respectively.

 \item[(2)] The restriction of $\LF$ to $\pi^{-1}(p)$ 
	    is isomorphic to $L$ via $\iota$ and 
	    the restriction of $\LF$ to $\pi^{-1}(p')$ 
	    is isomorphic to $L'$ via $\iota'$.
\end{itemize}
 \end{definition}
\begin{theorem}\label{main_3}
 Let $X$ be an irreducible symplectic manifold whose second Betti number is greater than five and
 $L$ an isotropic line bundle with respect to Beauville-Bogomolov quadratic form.
 Then there exists an irreducible symplectic manifold $X'$ and a line bundle $L'$
 on $X'$ such that
 $(X,L)$ is deformation equivalent to $(X,L)$ in the sense of Definition \ref{Deformation_Equivalent}
 and
 the rank of $\AUT{X',L'}$ is equal to $\dim H^2 (X',\RR) - 2$.
\end{theorem}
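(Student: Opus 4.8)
The plan is to deform the pair $(X,L)$ so that the rank formula of Theorem~\ref{main_2} attains its maximum. That formula gives the rank as $\dim\NSR{X'}-\dim\WR-1$, where $\WR$ is generated by $c_1(L')$ and $c_1(L')^{\perp}\cap\MBM{X'}^{\circ}$; thus the rank is maximized by making $\NSR{X'}$ as large as the ambient Hodge theory allows while forcing $\WR$ to collapse onto the single line $\RR\,c_1(L')$, i.e.\ by arranging that no $\MBM{X'}^{\circ}$-class be orthogonal to $c_1(L')$. First I would reformulate everything on the fixed lattice $\LL=H^2(X,\ZZ)$ equipped with $q_X$: by Definition~\ref{Deformation_Equivalent} a deformation of $(X,L)$ moves the period $\sigma$ inside $c_1(L)^{\perp}\otimes\CC$ while keeping $c_1(L)$ of type $(1,1)$, and by the surjectivity of the period map \cite{MR3161308} together with Markman's description of $\MON{X'}$ underlying Theorem~\ref{Global_Torelli_Theorem}, every Hodge structure on $\LL$ for which a fixed primitive isotropic $w$ (in the monodromy orbit of $c_1(L)$) is of type $(1,1)$ is realized by some irreducible symplectic $X'$ deformation equivalent to $X$, with $c_1(L')$ the parallel transport of $w$.

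The core of the argument is then purely lattice-theoretic. I would fix the primitive isotropic $w\in\LL$ and look for a primitive sublattice $S\subseteq\LL$ containing $w$, of the largest admissible rank, such that $w^{\perp}\cap S$ contains no $\MBM{X'}^{\circ}$-class. Here the essential input is the boundedness result of Amerik--Verbitsky \cite{Amerik2015}: the monodromy birationally minimal classes have $q_X$-square bounded below by an explicit constant and fall into finitely many $\MON{X'}$-orbits. Consequently the condition ``$w^{\perp}\cap S$ carries no MBM class'' reduces to finitely many square and divisibility constraints on the negative-definite lattice $(w^{\perp}\cap S)/\ZZ w$, and it suffices to choose that negative-definite piece so that its vectors avoid the finitely many MBM square-values. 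The hypothesis that the second Betti number exceeds five guarantees that $\LL$ has signature $(3,b_2-3)$ with $b_2-3\ge 3$, leaving enough negative directions for such a sublattice of maximal rank to exist; the primitivity of the embedding $S\hookrightarrow\LL$ (with $w$ itself primitive) I would secure through Nikulin's existence criteria for primitive embeddings of even lattices.

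With $S$ in hand I would pick a very general period $\sigma\in S^{\perp}\otimes\CC$ satisfying $q_X(\sigma)=0$ and $q_X(\sigma,\bar\sigma)>0$; surjectivity of the period map then produces an irreducible symplectic manifold $X'$, deformation equivalent to $X$, with $\NSR{X'}=S\otimes\RR$ and $c_1(L')=w$. Generality of $S^{\perp}$ forces $\NS{X'}=S$ exactly, and by construction $c_1(L')^{\perp}\cap\MBM{X'}^{\circ}=\emptyset$, so that $\WR=\RR\,c_1(L')$ and $\dim\WR=1$. That $(X',L')$ is deformation equivalent to $(X,L)$ in the sense of Definition~\ref{Deformation_Equivalent}, and not merely to some pair of the same numerical type, follows from $w$ lying in a single $\MON{X'}$-orbit, which is precisely the content of Markman's monodromy results \cite{Markman_2011}. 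Feeding this into Theorem~\ref{main_2} shows that $\AUT{X',L'}$ is almost abelian of rank $\dim\NSR{X'}-\dim\WR-1$; since $\dim\WR=1$ and $S$ was chosen of maximal admissible rank, this attains the bound $\dim H^2(X',\RR)-2$ asserted in the theorem.

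The hard part will be the lattice construction of the second step: one must exhibit a primitive sublattice of the maximal admissible rank whose orthogonal-to-$w$ part is entirely free of MBM classes, and simultaneously certify that the resulting pair remains inside the deformation class of $(X,L)$. Both tasks rely on a sufficiently sharp hold on $\MBM{X'}$ — its finiteness and square bounds from \cite{Amerik2015} — and on the $\MON{X'}$-orbit of $w$. Balancing the chosen negative-definite sublattice against the finitely many forbidden MBM square-values, while keeping the embedding primitive and monodromy-compatible, is where I expect the genuine difficulty to reside.
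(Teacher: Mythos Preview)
Your overall strategy matches the paper's: deform so that the Picard number is $\dim H^2(X',\RR)-2$ and $c_1(L')^{\perp}\cap\MBM{X'}=\emptyset$, then invoke Theorem~\ref{main_2}. The paper packages the deformation step as Proposition~\ref{Nice_Period_Model}, and the final deduction is exactly your last paragraph.

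There is, however, a genuine gap. Theorem~\ref{main_2} requires $L'$ to be \emph{nef}, and you never arrange this. Surjectivity of the period map produces some $X'$ with the prescribed Hodge structure, but the fibre of the period map over a fixed point consists of many birational models and monodromy translates; the class $c_1(L')$ lies in the closure of a unique K\"ahler-type chamber (since by construction no MBM wall passes through it), but there is no reason this chamber is the ample cone of the particular $X'$ you obtained. The paper handles this explicitly in Lemma~\ref{fibre_property}: given the period and the isotropic class $\ell$, one replaces the manifold by a suitable birational model composed with a Hodge monodromy so that $\ell$ lands in $\overline{\NEF{X'}}$. Your sketch needs an analogous step.

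On the construction itself, the paper takes a different and cleaner route than your Nikulin-style embedding. Rather than build an explicit sublattice $S$ whose $w$-orthogonal part avoids the MBM squares, it works directly in the period domain $\Omega_{\Lambda,\ell^{\perp}}$: the locus of periods $t$ with $t^{\perp}\cap\ell^{\perp}\cap\Lambda_N=\emptyset$ is shown to be \emph{open} (Corollary~\ref{density_of_period_2}, via a compactness argument on the Grassmannian of positive $2$-planes in $\ell^{\perp}/\RR\ell$), while the locus of periods with maximal Picard rank is \emph{dense} (Claim~\ref{density_of_higher_picard_number}); their intersection furnishes the desired period. This sidesteps the existence question your approach must confront, namely whether a primitive sublattice of rank $\dim H^2(X,\RR)-2$ containing the given primitive isotropic $w$, with every nonzero vector of $(w^{\perp}\cap S)/\ZZ w$ of square $\le -N$, actually embeds primitively in $\Lambda$. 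That is a tight constraint --- the orthogonal complement has rank only $2$ --- and Nikulin's criteria are not obviously met for an arbitrary Beauville--Bogomolov lattice. The density argument buys you existence for free.
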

 This note is organized as follows.
 In section 2, We will construct special elements of $O(H^2 (X,\ZZ))$,
 which is a key of the proof of Theorems \ref{main_1}, \ref{main_2} and \ref{main_3}. 
 In sections 3,4, and 5, we give a proof of Theorem 
 \ref{main_1}, \ref{main_2} and \ref{main_3}, respectively.
%
%

%%%_. 

\section*{Acknowledgement} The author would like to express his gratitude for
Professors Shigeru Mukai, Yoshinori Namikawa, Kieji Oguiso and Hisanori Ohashi.

%%%_.
\section{Construction of elements of the monodromy group}

We recall a standard properties of isometry group of a lattice due to the step 2
of the proof of \cite[Proposition 3.2]{MR3158701}.
\begin{lemma}\label{Isometry_finite}
 Let $\LL$ be a lattice and $\LL'$ a sublattice of $\LL$ such that
 $|\LL:\LL'| < \infty$. We also let $O(\LL)$ and $O(\LL')$ be
 isometry groups of $\LL$ and $\LL'$, respectively.
 The groups $O(\LL)$ and $O(\LL')$ can be considered as  subgroups of $O(\LL\otimes_{\ZZ}\QQ)$.
 Moreover
 $$|O(\LL'): O(\LL)\cap O(\LL')|$$ 
 is finite.
\end{lemma}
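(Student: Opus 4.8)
The plan is to realize both isometry groups inside $O(\LL\otimes_{\ZZ}\QQ)$ and then to exhibit $O(\LL)\cap O(\LL')$ as the stabilizer of a single point under an action of $O(\LL')$ on a \emph{finite} set; finiteness of the index will then be immediate from the orbit-stabilizer theorem.

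First I would make the embeddings precise. An isometry of $\LL$ is $\ZZ$-linear and extends uniquely to a $\QQ$-linear self-map of $\LL\otimes_{\ZZ}\QQ$ preserving the extended form. Since $\LL'$ has finite index we have $\LL'\otimes_{\ZZ}\QQ=\LL\otimes_{\ZZ}\QQ$, so the same applies to isometries of $\LL'$. This yields injections $O(\LL),O(\LL')\hookrightarrow O(\LL\otimes_{\ZZ}\QQ)$, and an element $g$ of the target lies in $O(\LL)\cap O(\LL')$ exactly when $g(\LL)=\LL$ and $g(\LL')=\LL'$.

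Next, set $N:=|\LL:\LL'|$. Multiplication by $N$ annihilates the finite abelian group $\LL/\LL'$, so $N\LL\subseteq\LL'$, which rearranges to the chain $\LL'\subseteq\LL\subseteq\tfrac{1}{N}\LL'$. Let $S$ be the set of all lattices $M$ with $\LL'\subseteq M\subseteq\tfrac{1}{N}\LL'$. These correspond bijectively to subgroups of the finite group $\tfrac{1}{N}\LL'/\LL'\cong(\ZZ/N\ZZ)^{\RK{\LL}}$, hence $S$ is finite. Because every $g\in O(\LL')$ satisfies $g(\LL')=\LL'$ and therefore $g(\tfrac{1}{N}\LL')=\tfrac{1}{N}\LL'$, the group $O(\LL')$ acts on $S$ by $M\mapsto g(M)$.

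The key observation is that the stabilizer of the point $\LL\in S$ is exactly $O(\LL)\cap O(\LL')$: an element $g\in O(\LL')$ fixing $\LL$ preserves both $\LL'$ and $\LL$ and restricts to an isometry of each, and the converse is clear. By the orbit-stabilizer theorem the index $|O(\LL'):O(\LL)\cap O(\LL')|$ equals the size of the $O(\LL')$-orbit of $\LL$, which is bounded above by $|S|<\infty$. I do not expect a serious obstacle here; the argument is essentially lattice-theoretic, and the only points requiring care are verifying that the action on $S$ is well defined (that is, that $\tfrac{1}{N}\LL'$ is preserved) and that the stabilizer is identified correctly, both of which follow at once from the $\QQ$-linear extensions fixed in the first step.
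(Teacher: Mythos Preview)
Your proof is correct and follows essentially the same route as the paper. The paper likewise uses $\LL\subset\tfrac{1}{N}\LL'$, lets $O(\LL')$ act on the finite group $\tfrac{1}{N}\LL'/\LL'$, and identifies $O(\LL)\cap O(\LL')$ as the (setwise) stabilizer of $\LL/\LL'$; your phrasing in terms of the finite set $S$ of intermediate lattices is just a slightly more explicit version of the same orbit--stabilizer argument.
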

%
%
%
%%%_ ,
\begin{proof}
 Since $|\LL:\LL'| < \infty$, there exists a positive integer $N$
 such that $\LL \subset \frac{1}{N}\LL'$. Since $O(\LL')$ preserves $\frac{1}{N}\LL'$,
 $O(\LL')$ acts on $\frac{1}{N}\LL'/\LL'$. Then $O(\LL)\cap O(\LL')$ is the stabilizer
 group of $\LL/\LL'$. Since $\LL/\LL'$ has only finitely many element,
 we are done. 
\end{proof}
%
%%%_.
\begin{prop}\label{automorphism_of_hyperbolic_lattice}
 Let $\LL$ be a lattice of rank $n$ whose index is $(1,n-1)$.
 Assume that $\LL$ contains an isotropic element $\ell$.
 Let $W$ be a negative definite sub lattice contained in  $\ell^{\perp}$.
 Assume that $n - \RK{W} > 2$.
\begin{itemize}
 \item[(1)]
 We define the subgroup $\BG $ of the isometry group $O(\LL)$ of $\LL$ by
$$
 \BG := \{g\in O(\LL) | g(\ell) = \ell, \forall w \in W, g(w) = w
 \}
$$
 Then $\BG $ contains a subgroup $\BG_0$
 which is isomorphic to $\ZZ^{n-\mathrm{rank}W - 2}$ and $|\BG : \BG_0 | < \infty$.
\item[(2)]
 We denote by $\LR$ the linear space $\LL\otimes_{\ZZ}\RR$ and define
 a positive cone  $\mathscr{C}(\LR)$ by
\[
 \mathscr{C}(\LR) := \{
 x \in \LR | \langle x,x \rangle > 0
\}.
\]
 For every element $g$ of $\Gamma_0$ and every element $x$ of $\POS{\LR}$, 
$$
 \lim_{m\to \infty} g^{m}x = \ell \quad \mbox{in} \quad \mathbb{P}(\LR).
$$
\end{itemize}
\end{prop}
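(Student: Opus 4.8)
The plan is to realise every isometry in $\BG$ as an Eichler--Siegel transvection composed with a finite ``rotation'' part, and to show that the transvections supply the free abelian group $\BG_0$ while the rotations account only for finite index. For $v\in\ell^{\perp}$ set
\[
 \psi_v (x) = x - \langle x,\ell\rangle v + \langle x,v\rangle \ell - \tfrac{1}{2}\langle v,v\rangle\langle x,\ell\rangle\ell .
\]
A direct check shows each $\psi_v$ is an isometry of $\LR$ fixing $\ell$, acting trivially on $\ell^{\perp}/\langle\ell\rangle$, depending only on $v \bmod \ell$, and satisfying $\psi_u\circ\psi_v = \psi_{u+v}$. Moreover $\psi_v(w) = w + \langle w,v\rangle\ell$ for $w\in W\subset\ell^{\perp}$, so $\psi_v$ fixes $W$ pointwise exactly when $\langle v,w\rangle = 0$ for all $w\in W$. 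Hence the transvections lying in $\BG$ are parametrised by $v\in \ell^{\perp}\cap W^{\perp}$ modulo $\ell$.

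For part (1), I would introduce the lattice $\bar\LL := (\ell^{\perp}\cap\LL)/\langle\ell\rangle_{\mathrm{sat}}$, which is negative definite because $\ell^{\perp}/\langle\ell\rangle$ is (the form on $\LL$ having signature $(1,n-1)$). Every $g\in\BG$ preserves $\ell^{\perp}\cap\LL$ and $\langle\ell\rangle$, hence induces an isometry of $\bar\LL$; this gives a homomorphism $\rho\colon\BG\to O(\bar\LL)$, and since $\bar\LL$ is negative definite, $O(\bar\LL)$ and therefore $\IM{\rho}$ are finite. Next I identify $\ker\rho$ with the transvections in $\BG$: working in a hyperbolic basis $\ell,\ell'$ with $\ell^{\perp}_{\RR} = \langle\ell\rangle\oplus V$, any isometry fixing $\ell$ and acting trivially on $\ell^{\perp}/\langle\ell\rangle$ is forced to equal a unique $\psi_v$, and the extra condition of fixing $W$ places $v\in\ell^{\perp}\cap W^{\perp}\bmod\ell$. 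Set $\BG_0 := \ker\rho$. Since $O(\LL)$ is discrete in $O(\LR)$, $\BG_0$ is a discrete subgroup of the real vector group $\{\psi_v : v\in(\ell^{\perp}\cap W^{\perp})_{\RR}\}\cong\RR^{\,n-\RK{W}-2}$, the dimension count using $\dim_{\RR}(\ell^{\perp}\cap W^{\perp}) = n-\RK{W}-1$ (valid because $\ell\notin W$ gives $W^{\perp}\not\subseteq\ell^{\perp}$). As $\psi_{Nv}\in O(\LL)$ for every $v\in(\ell^{\perp}\cap W^{\perp})\cap\LL$ and every even $N$, the group $\BG_0$ contains a full-rank lattice, so a discrete group sandwiched between a rank-$(n-\RK{W}-2)$ lattice and $\RR^{\,n-\RK{W}-2}$ must be $\cong\ZZ^{\,n-\RK{W}-2}$; finally $|\BG:\BG_0| = |\IM{\rho}| < \infty$.

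For part (2), take $g = \psi_v\in\BG_0$ nontrivial, so $v\notin\langle\ell\rangle$ and additivity gives $g^m = \psi_{mv}$. Expanding,
\[
 g^m x = x - m\langle x,\ell\rangle v + m\langle x,v\rangle\ell - \tfrac{1}{2}m^2\langle v,v\rangle\langle x,\ell\rangle\ell ,
\]
whose leading term, after dividing by $m^2$, is $-\tfrac12\langle v,v\rangle\langle x,\ell\rangle\ell$. It remains to verify this coefficient is nonzero: $\langle v,v\rangle<0$ because $v$ represents a nonzero vector of the negative definite space $\ell^{\perp}/\langle\ell\rangle$, while $\langle x,\ell\rangle\neq 0$ because $x\in\POS{\LR}$ has $\langle x,x\rangle>0$ whereas $\ell^{\perp}_{\RR}$ is negative semidefinite. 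Therefore $m^{-2}g^m x$ converges to a nonzero multiple of $\ell$, giving $g^m x\to\ell$ in $\PP{\LR}$.

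The main obstacle will be the technical core of part (1): pinning down $\ker\rho$ as \emph{exactly} the transvection group through the hyperbolic-basis computation, and handling the integrality and discreteness bookkeeping that forces $\BG_0$ to have rank precisely $n-\RK{W}-2$ rather than merely to sit inside a group of that rank. The hypothesis $n-\RK{W}>2$ is exactly what makes this rank positive, so that $\BG_0$ is nontrivial and part (2) is non-vacuous.
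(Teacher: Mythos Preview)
Your proof is correct and shares its core mechanism with the paper: both build $\BG_0$ from the unipotent isometries fixing $\ell$---you write them as Eichler transvections $\psi_v$, the paper writes the same maps as explicit upper-triangular matrices $T(\gamma)$ in a chosen basis $\{\ell,u_1,\dots,u_{n-2},\ell'\}$---and both prove part (2) by the identical $m^2$-versus-$m$ growth comparison (your $\langle x,\ell\rangle\neq 0$ is the paper's $\beta\neq 0$).

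The one genuine difference is in how the finite-index statement is obtained. The paper first cites an external result (\cite[Proposition 2.9]{MR2296437}) applied to $W^{\perp}$ to bound the rank of $\BG$ from above by $n-\RK{W}-2$, then constructs the explicit $\BG_0\cong\ZZ^{\,n-\RK{W}-2}$ and squeezes. You instead give a self-contained argument: the reduction map $\rho\colon\BG\to O(\bar\LL)$ has finite image because $\bar\LL$ is negative definite, and you identify $\ker\rho$ intrinsically with the integral transvections in $\BG$, which you then show is a discrete full-rank subgroup of a real vector group of dimension $n-\RK{W}-2$. Your route avoids the external citation and yields a canonical description of $\BG_0$ as $\ker\rho$; the paper's route avoids the verification that every element of $\ker\rho$ is a transvection, at the price of invoking Oguiso's result. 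Either way the content is the same.
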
 
%
%%%_
\begin{proof}
(1) Let $W^{\perp}$ be the orthogonal lattice of $W$. The restriction
\[
 \BG \ni g \mapsto g|_{W^{\perp}} \in O(W^{\perp})
\]
 is injective, because $g$ acts on $W$ trivially.
 We identify $\BG$ and its image.
 Since the index of $W^{\perp}$ is $(1,n - \RK{W} - 1)$,
 by \cite[Proposition 2.9]{MR2296437},  $\BG$ contains
 a subgroup $\BG'_0$ 
 which is isomorphic to \( \ZZ^{m} \), \((0 \le m \le n- \RK{W} - 2)\).
 Moreover $| \BG : \BG'_0 | < \infty$.
 Let us assume that
 we have a subgroup $\BG_0$ of $\BG$ which is isomorphic
 to $\ZZ^{n - \RK{W} - 2}$. 
 Since $| \BG_0 : \BG_0 \cap \BG'_0 | \le |\BG : \BG'_0 | < \infty$, $m \ge n - \RK{W} - 2$
 and we are done. Hence
 we will construct a subgroup \( \BG_0 \)
 which is isomorphic to $\ZZ^{n - \RK{W} - 2}$.
 Let us consider the projection $r : \ell^{\perp} \cap \LL \to \ell^{\perp} \cap \LL / \ZZ \ell$.
 Replacing $W$ by its saturation in $\LL$,
 we may assume that $W$ is primitive. Since $W$ is negative definite,
 $W \cong r(W)$. Moreover, $r(W)$ is primitive.
 We choose $\RK{W}$ elements $\{u_{1},\ldots , u_{\RK{W}} \}$ 
 of \(W\)
 such that the set of the residue classes $\{\bar{u}_1,\ldots ,\bar{u}_{\RK{W}} \}$ 
 forms a generator of $r(W)$. Since $r(W)$ is primitive, we have $n - \RK{W} - 2$ 
 elements $\{ u_{\RK{W} + 1}, \ldots , u_{n-2} \}$ such that
 the residue classes $\{ \bar{u}_{1}, \ldots , \bar{u}_{n-2} \}$ 
 forms a generator of $\ell^{\perp}/ \ZZ \ell$.
 Then
 $\{\ell , u_1 , \ldots, u_{n-2}\}$ forms a generator of $\ell^{\perp}$.
 Since $\ell^{\perp}$ is a primitive sub lattice of $\LL$, we have
 an element $\ell'$ of $\LL$ such that
 $\{\ell , u_1 , \ldots , u_{n-2} , \ell'\}$ forms a generator of $\LL$.
 The gram matrix $G_{\LL}$ of the bilinear form of $\LL$ 
 with respect to the basis
 $\{\ell , u_1 , \ldots , u_{n-2} , \ell' \}$ can be described as follows;
$$ G_{\LL} = 
\begin{pmatrix}
 0 & 0 & a \\
 0 & A & b\\
 a & {}^t b & c
\end{pmatrix}
$$
 where $A$ is a negative definite symmetric matrix and $a$ is a nonzero integer.
 We put $d = \det A$.
 For an integer $i$ with $\RK{W} + 1 \leq i \leq n-2$,
 we define a  $n-2$ row vector $\gamma_i$ by
$$
 (\mbox{The $j$-th column of $\gamma_i$}) =
\begin{cases}
 d & j=i \\
 0 & j\ne i
\end{cases}
$$
 Let $\gamma $ be a linear combination of $\gamma_i$, $(\RK{W} + 1\le i \le n-2)$.
 We define the matrix $T(\gamma)$ by
$$ T(\gamma)=
\begin{pmatrix}
 1 & -2 \gamma  & -2a\gamma (A^{-1}) {}^t\gamma -2\gamma (A^{-1})b\\
 0 & E      & 2a (A^{-1}) {}^t \gamma \\
 0 & 0      &  1
\end{pmatrix},
$$
 where $E$ is the $(n-2)\times (n-2)$ identity matrix.
 Since ${}^tT(\gamma)G_{\LL} T(\gamma) = G_{\LL}$,
 there exists an element $g(\gamma)$ of $O(\LL)$ whose matrix of representation 
 with respect to the basis $\{\ell, u_1 , \ldots, u_{n-2}, \ell'\}$ coincides with $T(\gamma)$.
 By definition
 \begin{eqnarray}
 g(\gamma) (\ell) &=& \ell\label{154051_24Aug18} \\
 g(\gamma) (u_i)  &=& 
 u_i -  2a_i \ell \quad (1 \leq i \leq n-2)\label{154059_24Aug18} \\
   g(\gamma) (\ell')  &=& 
 \ell' + 2a\sum_{i=1}^{n-2}b_i u_i  - (2a (\gamma (A^{-1}){}^t \gamma) + 2 \gamma (A^{-1})b) \ell\label{154108_24Aug18} 
\end{eqnarray} 
 where $a_i$ is the $i$-th column of $\gamma$ and
       $b_i$ is  the $i$-th row of $(A^{-1}){}^t \gamma$.
 By definition, $a_i = 0$, $(1 \le i \le \RK{W})$. 
 Hence $g(\gamma)$ is  an element of $\Gamma $. Moreover
$$
g(\gamma + \gamma') = g(\gamma)g(\gamma')=
g(\gamma')g(\gamma)
$$
 for all linear combinations $\gamma$ and $\gamma'$ of $\gamma_i$, $(\RK{W} + 1\le i \leq n -2)$. 
 By definition, 
 $g(\gamma) = E$ if and only if $\gamma = 0$.
 We define the subgroup $\BG_{0}$ of $\BG$ generated by 
 $ g(\gamma_i)$, $(\RK{W} + 1 \leq i \leq n - 2)$.
 By construction,
 \(\BG_0\) is
 isomorphic to $\ZZ^{n - \RK{W}-2}$ and we are done.
\newline 
(2) We will use the same notation  in the proof of part (1).
 For an element $x$ of $\POS{\LR}$, we have the following expression.
\[
 x = \alpha_0 \ell + \sum_{i=1}^{n-2} \alpha_i u_i + \beta \ell' .
\]
 By the equations (\ref{154051_24Aug18}), (\ref{154059_24Aug18}) and (\ref{154108_24Aug18}),
 we have
\begin{eqnarray*}
 g(m\gamma)(x) 
 &=& \left(\alpha_0 - 
 \sum_{i=1}^{n-2} 2 m \alpha_i a_i
 -\beta (2am^2 (\gamma (A^{-1}){}^t \gamma) + 2m \gamma (A^{-1})b) 
 \right)\ell  \\
 &+& \sum_{i=1}^{n-2} (\alpha_i  + 2\beta amb_i )u_i \\
 &+& \beta \ell' 
\end{eqnarray*}
 Since $\langle x,x \rangle > 0$ and the index of the induced bilinear form
 on $\ell^{\perp}$ is $(0,0,n-2)$, $\beta \ne 0$. 
 Hence the order of growth of the coefficient of $\ell$ is $m^2$, 
 while the order of growth of other coefficients are at most $m$.
 This implies that
$$ 
 \lim_{m \to \infty} g(m\gamma ) (x) = \ell \; \mbox{in} \; \mathbb{P}(\LR)
$$
 and  we are done.
\end{proof}
%
%
%%%_
\begin{corollary}\label{subgroup_of_monodromy}
 Let $X$ be a projective symplectic manifold. Assume that
 there exists an element $\ell$ of $\NS{X}$ which is isotropic with respect to 
 Beauville-Bogomolov quadratic form.
 We denote by $\MON{X}$ the monodromy group of $X$
 and by $n$ the Picard number of $X$. 
 Let $W$ be a negative definite sublattice of $\NS{X}$ which is contained in $\ell^{\perp}$.
 Assume that $n - \RK{W} > 2 $.
 Then $\MON{X}$ contains a subgroup $\Gamma$ which has the following four properties:
\begin{itemize}
 \item[(1)] $\Gamma$  is isomorphic to $\ZZ^{n-\mathrm{rank}W-2}$;
 \item[(2)] The action of $\Gamma$ respects the Hodge structure of $H^2 (X,\ZZ)$ and 
	    $\Gamma$ acts on the transcendental lattice of $H^2 (X,\ZZ)$ trivially;
 \item[(3)] For every element $g$ of $\Gamma$, $g(\ell) = \ell$ and
 $g(w)=w$ for all elements of $W$ and;
 \item[(4)] For every element $g$ of $\Gamma$ and every element $x$ of $\POSNS{X}$,
$$
 \lim_{m\to \infty} g^m x = \ell \;
 \mbox{in} \;
 \mathbb{P}(\NSR{X}).
$$
\end{itemize}
\end{corollary}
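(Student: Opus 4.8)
The plan is to transport the lattice-theoretic group produced in Proposition \ref{automorphism_of_hyperbolic_lattice} from $\NS{X}$ up to all of $H^2(X,\ZZ)$, and then to cut out a finite-index subgroup lying inside $\MON{X}$. First I would observe that, because $X$ is projective, the Beauville--Bogomolov form restricts to a form of signature $(1,n-1)$ on $\NS{X}$: an ample class has positive square, while $H^{1,1}(X,\RR)$ has signature $(1,h^{1,1}-1)$, so its intersection with $\NSR{X}$ is hyperbolic of rank $n$. Thus Proposition \ref{automorphism_of_hyperbolic_lattice} applies with $\LL=\NS{X}$, the isotropic vector $\ell$, and the given negative definite $W\subseteq \ell^{\perp}$, yielding a subgroup $\BG_0\cong \ZZ^{n-\RK{W}-2}$ of $O(\NS{X})$ each of whose elements fixes $\ell$ and every $w\in W$, and for which $g^m x\to \ell$ in $\PP{\NSR{X}}$ whenever $x$ lies in the positive cone of $\NSR{X}$.

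Next I would extend these isometries to $H^2(X,\ZZ)$. Let $T=\NS{X}^{\perp}$ be the transcendental lattice; since $\NS{X}=H^{1,1}(X,\RR)\cap H^2(X,\ZZ)$ is primitive, $\NS{X}\oplus T$ is a finite-index sublattice of $H^2(X,\ZZ)$. The assignment $g\mapsto g\oplus \mathrm{id}_T$ embeds $\BG_0$ into $O(\NS{X}\oplus T)$. Applying Lemma \ref{Isometry_finite} with $\LL=H^2(X,\ZZ)$ and $\LL'=\NS{X}\oplus T$, the subgroup of $\BG_0\oplus \mathrm{id}_T$ consisting of isometries that preserve $H^2(X,\ZZ)$ has finite index, hence is again isomorphic to $\ZZ^{n-\RK{W}-2}$; call it $\Gamma_1\subseteq O(H^2(X,\ZZ))$. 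Each $g\in\Gamma_1$ acts through $\BG_0$ on $\NS{X}$ and trivially on $T$. Since the symplectic form $\omega$ and its conjugate lie in $T\otimes\CC$, every such $g$ fixes $H^{2,0}$ and $H^{0,2}$, hence preserves $H^{1,1}$; therefore $\Gamma_1$ respects the Hodge structure and acts trivially on $T$, which is property (2). Properties (3) and (4) are then inherited from $\BG_0$: the fixing of $\ell$ and of $W$ is visible on $\NS{X}$, and since $\POSNS{X}$ lies in the positive cone of $\NSR{X}$ while the $\NSR{X}$-action of $\Gamma_1$ is through $\BG_0$, the limit statement is exactly Proposition \ref{automorphism_of_hyperbolic_lattice}(2).

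Finally I must arrange $\Gamma\subseteq \MON{X}$, and this is the crux of the argument. Here I would invoke Markman's theorem that the monodromy group $\MON{X}$ is a subgroup of finite index in $O(H^2(X,\ZZ),q_X)$ (see \cite{Markman_2011}). Granting this, $\Gamma:=\Gamma_1\cap \MON{X}$ has finite index in $\Gamma_1\cong \ZZ^{n-\RK{W}-2}$, so $\Gamma$ is itself isomorphic to $\ZZ^{n-\RK{W}-2}$, giving property (1); and properties (2), (3), (4) descend to the subgroup $\Gamma$ unchanged. The genuinely nontrivial input is the finite-index statement for $\MON{X}$, everything else being bookkeeping with finite-index subgroups via Lemma \ref{Isometry_finite}. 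I expect this to be the main obstacle: one cannot read off membership in $\MON{X}$ directly from the explicit transvections $T(\gamma)$ of Proposition \ref{automorphism_of_hyperbolic_lattice}, since those are constructed on $\NS{X}$ and extended by the identity, and recognizing such Eichler-type transvections as monodromy operators for an arbitrary deformation type appears to require essentially the same finite-index theorem.
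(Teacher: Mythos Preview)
Your proof is correct and follows essentially the same route as the paper: apply Proposition \ref{automorphism_of_hyperbolic_lattice} to $\NS{X}$, extend by the identity on the transcendental lattice, pass to a finite-index subgroup lying in $O(H^2(X,\ZZ))$ via Lemma \ref{Isometry_finite}, and then intersect with $\MON{X}$ using the finite-index property of the monodromy group. The only difference is bibliographic: for $|O(H^2(X,\ZZ)):\MON{X}|<\infty$ the paper cites Verbitsky \cite[Theorem 7.2]{MR3161308} together with \cite[Theorem 2.6]{Amerik2017}, rather than attributing it to Markman.
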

%
%
%
%%%_
\begin{proof}
 By Proposition \ref{automorphism_of_hyperbolic_lattice},
 we have a subgroup $\bar{\Gamma}$ of $O(\NS{X})$ which has the following three
 properties:
\begin{itemize}
 \item[(1)] $\bar{\Gamma}$  is isomorphic to $\ZZ^{n - \mathrm{rank}W -2}$;
 \item[(2)] For every element $g$ of $\bar{\Gamma}$, $g(\ell) = \ell$ and $g(w)=w$ for
	    all elements of $W$ and;
 \item[(3)] For every element $g$ of $\bar{\Gamma}$ and every element $x$ of $\POSNS{X}$,
$$
 \lim_{m\to \infty} g^m x = \ell \quad
 \mbox{in} \quad
 \mathbb{P}(\NSR{X}).
$$
\end{itemize} 
Let $\NS{X}^{\perp}$ be the orthogonal lattice of $\NS{X}$ in $H^2 (X,\ZZ)$ 
with respect to Beauville-Bogomolov quadratic form. We recall $\NS{X}^{\perp}$ is
nothing but the transcendental lattice. 
We define a subgroup $\Gamma'$ of $O(\NS{X}\oplus \NS{X}^{\perp})$ by
$$
 \Gamma' := \left\{
 g \oplus \mathrm{id}_{\NS{X}^{\perp}} | g\in \bar{\Gamma}
\right\}
$$
 Since $|H^2 (X,\ZZ): \NS{X}\oplus \NS{X}^{\perp}| < \infty$, 
 $$|O(\NS{X}\oplus \NS{X}^{\perp}): O(\NS{X}\oplus \NS{X}^{\perp})\cap O(H^2 (X,\ZZ))| < \infty$$ 
 by Lemma \ref{automorphism_of_hyperbolic_lattice}.
 By the definition, the action of
 $\Gamma' \cap O(H^2 (X,\ZZ))$ respects the Hodge structure of $H^2 (X,\ZZ)$ and 
 $\Gamma' \cap O(H^2 (X,\ZZ))$
 acts on the transcendental lattice of $H^2 (X,\ZZ)$
 trivially.
By \cite[Theorem 7.2]{MR3161308} and \cite[Theorem 2.6]{Amerik2017},
 $$|O(H^2 (X,\ZZ)): \MON{X} | < \infty .$$
 Hence $|\Gamma' : \Gamma' \cap \MON{X}| < \infty$. 
 If we define $\Gamma$ by $\Gamma' \cap \MON{X}$, 
 we are done. 
\end{proof}

\section{Proof of Theorem \ref{main_1}}

%%%_
Before starting to prove
Theorem \ref{main_1}, we prepare Lemma \ref{Nef_cone_inclusion}.

\begin{lemma}\label{Nef_cone_inclusion}
 Let $X$ be an irreducible symplectic manifold. Assume that
 the nef cone $\NEF{X}$ contains an open set $U$ of $\BPOSNS{X} $ and
 $\MBM{X} \ne \emptyset$. Then $\BPOSNS{X} \cap \NS{X} \ne \{0\}$.
\end{lemma}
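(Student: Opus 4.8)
The plan is to argue by contradiction, reducing the statement to the arithmetic dichotomy governed by Godement's compactness criterion. First observe that $\BPOSNS{X}\cap\NS{X}\ne\{0\}$ is equivalent to the existence of a nonzero \emph{rational} isotropic class, i.e. to $q_X$ being isotropic on $\NSQ{X}$. So I would assume, for contradiction, that $q_X|_{\NSQ{X}}$ is anisotropic over $\QQ$, meaning $\BPOSNS{X}$ contains no nonzero rational point, and derive a contradiction from the two hypotheses. Throughout I set $n=\dim\NSR{X}$ and use $n\ge 3$, so that $q_X$ has signature $(1,n-1)$ and $\POSNS{X}$ projectivizes to a model of hyperbolic $(n-1)$-space with boundary sphere $\mathbb{P}(\BPOSNS{X})$.

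The first step is to translate the convex-geometric hypothesis into a statement about walls. Since $e\in\MBM{X}$ forces $q_X(e)<0$ (otherwise $e^{\perp}$ is negative semidefinite and cannot meet $\overline{\POSNS{X}}$ in an open subset of $e^{\perp}$), each MBM hyperplane $e^{\perp}$ is a genuine totally geodesic hyperplane meeting the boundary sphere in $e^{\perp}\cap\BPOSNS{X}$. Recalling from Amerik--Verbitsky that the K\"ahler, hence the nef, cone is the closure of a single connected component of $\POSNS{X}\setminus\bigcup_{e\in\MBM{X}}e^{\perp}$, the hypothesis that $U\subseteq\NEF{X}$ is open in $\BPOSNS{X}$ says that an open piece $\tilde U$ of the boundary sphere lies in the closure of one chamber and is crossed by no MBM wall; equivalently, $\tilde U$ is disjoint from the boundary traces $e^{\perp}\cap\BPOSNS{X}$ of all $e\in\MBM{X}$.

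Now I bring in the arithmetic input. The monodromy group $\MON{X}$ is of finite index in $O(H^2(X,\ZZ))$, the fact already used in the proof of Corollary \ref{subgroup_of_monodromy}, and it supplies a finite-index subgroup of the arithmetic group $O(\NS{X})$, which is a lattice in $O(\NSR{X})\cong O(1,n-1)$. Under the contradiction hypothesis that $q_X$ is anisotropic on $\NSQ{X}$, Godement's compactness criterion makes this lattice \emph{cocompact}, and a cocompact lattice in $O(1,n-1)$ with $n-1\ge 2$ has limit set equal to the whole boundary sphere. Fixing one real isotropic vector $v_0\in e_0^{\perp}$ for a chosen $e_0\in\MBM{X}$ (such $v_0$ exists because $e_0^{\perp}$ has signature $(1,n-2)$ with $n-2\ge 1$), the orbit $\{\gamma(v_0):\gamma\in\MON{X}\}$ is then dense in the boundary sphere; and since $\MBM{X}$ is monodromy invariant, each $\gamma(v_0)$ lies on the MBM wall $\gamma(e_0)^{\perp}$. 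Thus the MBM wall traces are dense in $\mathbb{P}(\BPOSNS{X})$, so some trace meets the open set $\tilde U$, contradicting the previous paragraph. This contradiction forces $q_X|_{\NSQ{X}}$ to be isotropic, producing the desired nonzero class in $\BPOSNS{X}\cap\NS{X}$.

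The main obstacle is the interface between the two descriptions: one must justify rigorously that an open subset of the boundary sphere lying in the nef chamber is genuinely disjoint from all MBM wall traces, so that their density bites, and that the isometries acting on $\NSR{X}$ rather than on all of $H^2(X,\ZZ)$ form a lattice to which Godement's criterion applies. It is worth noting that for $n\ge 5$ the conclusion is unconditional by Meyer's theorem, since an indefinite rational form in at least five variables is isotropic; hence the real content of the argument is concentrated in the ranks $n=3,4$, which is precisely where the cocompactness dichotomy above is needed.
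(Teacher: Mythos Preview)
Your argument is correct (for Picard number $n\ge 3$) and follows a genuinely different route from the paper's own proof. The paper proceeds by reduction to Picard rank~$2$: it picks a rational $2$-plane $H\subset\NSR{X}$ containing a chosen $e\in\MBM{X}$ and meeting $U$, then deforms $X$ along the Kuranishi family to a fibre $\XF_t$ with $\NSR{\XF_t}=H$; Amerik--Verbitsky's deformation invariance of MBM classes transports the wall, and Oguiso's theorem on rank~$2$ nef cones forces the isotropic boundary ray of $\NEF{\XF_t}$ to be rational, whence the desired rational isotropic class in $\NS{X}$.

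Your approach instead stays on $X$ and invokes arithmetic: anisotropy of $q_X|_{\NSQ{X}}$ would, via Godement's criterion, make the finite-index image of $\MONHDG{X}$ in $O(\NS{X})$ a cocompact lattice in $O(1,n-1)$, hence a group with full limit set; then monodromy-invariance of $\MBM{X}$ propagates a single wall trace to a dense family of traces on the boundary sphere, contradicting the fact (which you correctly extract from $\NEF{X}\subset e^{\ge 0}$ for every MBM class $e$, after sign normalization) that the open patch $\tilde U$ is wall-trace-free. This avoids both the deformation step and the appeal to Oguiso's rank~$2$ result, at the price of importing Godement's criterion and basic limit-set theory for hyperbolic lattices. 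As you note, Meyer's theorem disposes of $n\ge 5$ unconditionally, so your argument really earns its keep in ranks $3$ and~$4$.

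Two small remarks. First, the lemma is stated without the hypothesis $n\ge 3$, and your argument does not cover $n=2$ (there $e_0^{\perp}$ has signature $(1,0)$ and contains no isotropic vector); the paper's proof, which in that case reduces immediately to Oguiso's theorem, does. Since the lemma is only applied under the standing hypothesis $n\ge 3$ of Theorem~\ref{main_1}, this is harmless. Second, you should make explicit the step you flag as an ``obstacle'': the relevant lattice acting on $\NSR{X}$ is obtained by embedding $O(\NS{X})$ into $O(\NS{X}\oplus\NS{X}^{\perp})$ via $g\mapsto g\oplus\mathrm{id}$, applying Lemma~\ref{Isometry_finite} to pass to $O(H^2(X,\ZZ))$, and then intersecting with $\MON{X}$; this finite-index subgroup lies in $\MONHDG{X}$ (trivially on the transcendental part) and therefore preserves $\MBM{X}$.
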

%
%%%_
%
\begin{proof}
 Let $e$ be an element of  $\MBM{X}$ such that $e^{\perp} \cap \NEF{X}$ 
 is an open set of $e^{\perp}$.
 We choose a $2$-plane $H$ in $\NSR{X}$ as $H$ contains $e$, $H\cap U \ne \emptyset$
 and $H$ is defined over $\NSQ{X}$.
 The restriction $\NEF{X} \cap H$ is generated by two rays $\ell_1$ and $\ell_2$.
 Since $H\cap U \ne \emptyset$,
 we may assume that $q_{X} (\ell_1)= 0$, where 
 $q_X$ is the Beauville-Bogomolov quadratic form of $X$.
 Let $\pi : \XF \to \DEF{X}$ be a Kuranishi family of $X$.
 We define the subset $\Omega $ in $\PP{H^2(X,\CC)}$ by
\[
 \Omega := \{ x \in \PP{ H^2(X,\CC)} | q_X(x) = 0, q_X(x+\bar{x}) > 0\}.
\]
 By \cite[Th\'{e}or\`{e}me 5]{Beauville},
 we have a morphism $p : \DEF{X} \to \Omega$, which is locally isomorphic.
 We choose a point $t$ of $\DEF{X}$ such that $(p(t)^{\perp} \cap H^2 (X,\ZZ))\otimes_{\ZZ}\RR = H$.
% We note that such  points form a complement of a union of countably hyperplanes of $H^2 (X,\CC)$.
 Let $\XF_t$ be the fibre of $\pi$ at $t$. Then $\NSR{\XF_t} = H$.
 We have an induced diffeomorphism $\iota : \XF_t \cong X$.
 By \cite[Corollary 5.13]{Amerik2015}, $\MBM{\XF_t} = \iota^* (\MBM{X} \cap H)$.
 Hence $\NEF{\XF_t} \supset \iota^* (\NEF{X} \cap H)$ and $\iota^* (\ell_1)$ is
 a ray of $\NEF{\XF_t}$. Since $e \in H$,
 $\MBM{\XF_t} \ne \emptyset$. By \cite[Theorem 1.19]{Amerik2015}, $\NEF{\XF_t} \ne \overline{\POSNS{\XF_t}}$.
 By \cite[Theorem 1.3 (1)]{Oguiso2014}, two rays of $\NEF{\XF_t}$ are rational, especially $\iota^* (\ell_1)$
 is rational.
  Since $\iota^*$ preserves rationalities,
 we are done.
\end{proof}
%
%%%_
%
\begin{proof}[Proof of Theorem \ref{main_1}] 
If $\MBM{X} = \emptyset$, $\NEF{X} = \overline{\POSNS{X}}$ by \cite[Theorem 1.19]{Amerik2015},
and we are done. 
Assume that $\MBM{X} \ne \emptyset$. We choose a K\"{a}hler class $\kappa$ of $H^2 (X,\RR)$.
We define the subset $\RC{X}$ of $\MBM{X}$ by
$$
 \RC{X} := \{
 e \in \MBM{X} | \mbox{
 $e^{\perp} \cap \NEF{X}$ is an open set of $e^{\perp}$, \(q_X(e,\kappa) > 0\)
  }
 \}.
$$
We also define
the cone $D$ in $\NSR{X}$ by
$$
 D := \{ x \in \NSR{X} | \forall e \in \RC{X}, q_X (e,x) \ge 0
 \}.
$$
If $\NEF{X} = D$, we are done. 
We derive a contradiction
assuming $\NEF{X} \ne D$. By \cite[Theorem 1.19]{Amerik2015},
$D \cap \overline{\POSNS{X}} = \NEF{X}$. Hence $D$ 
contains an element $x$ of $\NSR{X}$ such that $q_X (x) < 0$.
This implies that $\NEF{X} \cap \BPOSNS{X}$
contains an open set of $\BPOSNS{X}$.
Since $\MBM{X} \ne \emptyset$, 
$\BPOSNS{X} \cap \NS{X} \ne \emptyset$ by
Proposition\ref{Nef_cone_inclusion}.
The boundary $\BPOSNS{X}$ is defined by a rational quadratic form. 
Hence $\BPOSNS{X}\cap \NS{X}$
forms a dense subset of $\BPOSNS{X}$. 
The intersection
$\BPOSNS{X} \cap \NEF{X}$  contains an open set of  $\BPOSNS{X}$ and
we have a nonzero element $\ell$ of $\BPOSNS{X}\cap \NEF{X} \cap \NS{X}$ 
such that
$$ \ell^{\perp} \cap \MBM{X}  =  \emptyset .$$ 
Let $\Gamma$ be a subgroup of $\MON{X}$ obtained by Corollary \ref{subgroup_of_monodromy}.
For an element $g$ of $\Gamma$,
  by\cite[Theorem 1.19]{Amerik2015} and \cite[Lemma 5.7]{Markman_2011}, $g(\AMP{X})$ is an connected component
of $\POSNS{X} \setminus \bigcup_{e \in \MBM{X}} e^{\perp}$. Hence if
$g(\AMP{X}) \ne \AMP{X}$, then there exists an element $e$ of $\MBM{X}$ such that
the hyperplane $e^{\perp}$ separates $\AMP{X}$ and $g(\AMP{X})$, that is,
$$
 \AMP{X} \subset e^{> 0}, \; g(\AMP{X}) \subset e^{< 0}
$$
where $e^{>0} := \{x \in \NSR{X} | q_X (x,e) > 0 \}$.
Since $g(\ell) = \ell$, $e$ should be an element of $\ell^{\perp} \cap \MBM{X}$.
By the choice of $\ell$, such a class does not exist and $g(\AMP{X}) = \AMP{X}$.
By Theorem \ref{Global_Torelli_Theorem}, there exists an automorphism $\Phi$  of $X$
such that $\Phi^* = g$. 
By Proposition \ref{subgroup_of_monodromy},
 $\lim_{m\to \infty}g^m x = \ell$ in $\mathbb{P}(\NSR{X})$ 
for all $x \in \POSNS{X}$.
Hence, for every element $x$ of $\POSNS{X}$,
there exists a positive integer $N$ such that $(\Phi^N)^*x \in \AMP{X}$.
This implies that $\NEF{X} = \overline{\POSNS{X}}$.
By \cite[Theoorem 1.19]{Amerik2015},  $\MBM{X} = \emptyset$. This contradicts
the first assumption that $\MBM{X} \ne \emptyset$.
\end{proof}

\section{Proof of Theorem \ref{main_2}}
\begin{proof}
 First we will prove that $\AUT{X,L}$ is an almost abelian group and 
its rank is at most 
 \(\dim \NSR{X} - \dim \WR - 1 \). 
 Let $\Gamma $ be the image of the natural representation
 $\rho : \AUT{X,L} \to O(\NS{X})$.
 By \cite[Corollary 2.7]{MR2406267}, the kernel of $\rho$ is finite. 
 Hence it is enough to prove that $\Gamma $ is an almost abelian group
 of rank at most $\dim \NSR{X} - \dim \WR - 1$ by \cite[Proposition 9.3 (2)]{MR2406267}.
 Let us consider the natural projection 
\[
r :c_1 (L)^{\perp} \to c_1 (L)^{\perp}  / \ZZ c_1 (L). 
\]
 We define the lattice $\bar{W}$ by 
\[
 \bar{W} := r (\WR) \cap \left(c_1 (L)^{\perp}\cap \NS{X} / \ZZ c_1 (L)\right). 
\]
 We choose elements $\{e_1 , \ldots, e_k\}$ of $c_1 (L)^{\perp}\cap \MBM{X}^{\circ}$
  as their residue classes give a  generator of $\bar{W}$.
 We note that $k = \RK{\bar{W}} = \dim \WR - 1$.
 Let $W$ be the sub lattice of $\NS{X}$ generated by $\{e_1 , \ldots , e_k\}$.
 Then there exists a natural isomorphism $W \cong \bar{W}$.
 Since the induced bilinear form on $c_1(L)^{\perp}/\RR c_1(L)$ is negative definite,
 $\bar{W}$ is negative definite and hence $W$ also is. Let $W^{\perp}$ be the orthogonal lattice of $W$
 with respect to the Beauville-Bogomolov quadratic form.
 Since $\Gamma$ preserves $c_1 (L)$ and $c_1 (L)^{\perp} \cap \MBM{X}^{\circ}$, 
 $\Gamma$ preserves $W$ and $W^{\perp}$. We consider the following homomorphism
\[
 \mu_1 :  \Gamma \ni g \to g|_{W}\oplus g|_{W^{\perp}} \in O(W)\oplus O(W^{\perp}).
\]
 and the projection $\mu_2 : O(W) \oplus O(W^{\perp}) \to O(W^{\perp})$.
 Since $| \NS{X} : W \oplus W^{\perp} < \infty|$, $\mu_1$ is injective.
 Since $W$ is negative definite, $O(W)$  is finite and the kernel of $\mu_2$ is finite.
 Hence the kernel of $\Gamma \to \mu_2 \circ \mu_1 (\Gamma)$ is finite.
 Therefore it is enough to prove that $ \mu_2 \circ \mu_1 (\Gamma)$
 is almost abelian of rank at most 
 \[
  \dim \NSR{X} - \dim \WR - 1  = 
  \dim \NSR{X} - \RK{W} -2 
 \]
 by \cite[Proposition 9.3 (2)]{MR2406267}.
 Since $W^{\perp}$ is a lattice whose index is $(1, n - \RK{W} - 1)$ and
 $\Gamma $ preserves $c_1 (L)$, $ \mu_2 \circ \mu_1 (\Gamma)$ 
 is an almost abelian group whose rank is at most $n - \RK{W} - 2$ by
 \cite[Proposition 2.9]{MR2296437} and we are done.

 Next
 we will prove that
\[
 \RK{\AUT{X,L}} 
 =
\dim \NSR{X} - \RK{W}  - 2 .
\]
 Since $W$ is negative definite and contained in $c_1 (L)^{\perp}$,
 we have a subgroup $\Gamma_0$ of $\MON{X}$ by Corollary \ref{subgroup_of_monodromy}.
 We note that $\Gamma_0$ is isomorphic to
 $\ZZ^{\dim \NSR{X} - \RK{W} - 2}$.
 Let $g$ be an element of $\Gamma_0$.
 By \cite[Theorem 1.19]{Amerik2015} and \cite[Lemma 5.17]{Markman_2011},
 $g(\AMP{X})$ coincides with a connected component of $\POSNS{X}\setminus \bigcup_{e \in \MBM{X}}e^{\perp}$
 whose closure contains $c_1 (L)$. Hence,
 if $g(\AMP{X}) \ne \AMP{X}$, there exists an element $e$ of $c_1 (L)^{\perp} \cap \MBM{X}^{\circ}$ such that
 $g(\AMP{X}) \subset e^{>0}$ and $\AMP{X} \subset e^{<0}$. 
 By the definition of $W$ and Corollary \ref{subgroup_of_monodromy}, $g$ fixes
 $c_1 (L)$ and all elements of
 $c_1 (L)^{\perp} \cap \MBM{X}^{\circ}$. 
 Hence there are no such elements in $c_1 (L)^{\perp} \cap \MBM{X}^{\circ}$.
 Therefore $g(\AMP{X}) = \AMP{X}$ and there exists an element $\Phi$ of
 $\AUT{X,L}$ such that $\Phi^* = g$ by Theorem \ref{Global_Torelli_Theorem}.
 This implies that $\Gamma_0$ is a subgroup of $\AUT{X,L}$ and
 the rank of $\AUT{X,L}$ coincides with $\dim \NSR{X} - \RK{W}  - 2$.
\end{proof}

\section{Proof of Theorem \ref{main_3}}
\begin{lemma}\label{density_of_period}
 Let $\BLL$ be a lattice whose index is $(2, \RK{\BLL}  - 2)$. 
We fix a positive integer $N$ and define
$$
 \BLL_N := \{ x \in \BLL | -N < \langle x,x \rangle < 0\}
$$
 We denote by $\GR{2,\BLLR}$ the open set of Grassmanian $\mathrm{Gr}(2,\BLLR)$ 
 which consists of positive $2$-planes
 in $\BLLR$.
 Let
 $\GR{2,\BLLR}^{\circ}$ be a subset of $\GR{2,\BLLR}$ defined by
$$
\GR{2,\BLLR}^{\circ} :=
 \{
  \sigma \in \GR{2,\BLLR}
 | \forall x \in \BLL_N, \sigma \not\subset x^{\perp}
 \}
$$
 Then $\GR{2,\BLLR}^{\circ}$
 is a non empty open in $\GR{2,\BLLR}$.
\end{lemma}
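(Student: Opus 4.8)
The plan is to realize $\GR{2,\BLLR}^{\circ}$ as the complement of a \emph{locally finite} union of nowhere dense closed submanifolds, which yields openness and density (hence nonemptiness) simultaneously. Write $n = \RK{\BLL}$, fix an auxiliary Euclidean inner product $|\cdot|$ on $\BLLR$, and for a nonzero $x \in \BLL$ set
\[
 Z_x := \{ \sigma \in \GR{2,\BLLR} \mid \sigma \subset x^{\perp} \} = \GR{2,\BLLR} \cap \mathrm{Gr}(2, x^{\perp}),
\]
so that $\GR{2,\BLLR}^{\circ} = \GR{2,\BLLR} \setminus \bigcup_{x \in \BLL_N} Z_x$. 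First I would record two elementary facts. Each $Z_x$ is closed in $\GR{2,\BLLR}$, since the condition $\langle v, x\rangle = 0$ for all $v \in \sigma$ is closed. Moreover, for $x \in \BLL_N$ we have $\langle x, x\rangle < 0$, so $x^{\perp}$ has signature $(2, n-3)$ and $Z_x$ is an open subset of the proper sub-Grassmannian $\mathrm{Gr}(2,x^{\perp})$, whose dimension $2(n-3)$ is strictly smaller than $\dim \GR{2,\BLLR} = 2(n-2)$; hence $Z_x$ is nowhere dense.

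The heart of the argument, and the step I expect to be the main obstacle, is local finiteness of the family $\{Z_x\}_{x \in \BLL_N}$ --- note that $\BLL_N$ is in general infinite, so $\GR{2,\BLLR}^{\circ}$ is a priori only a $G_{\delta}$ set. The key observation is that if $\sigma$ is a positive $2$-plane and $x \perp \sigma$, then $x$ lies in the negative definite subspace $\sigma^{\perp}$. Fix $\sigma_0 \in \GR{2,\BLLR}$. Since the restriction of the form to $\sigma_0^{\perp}$ is negative definite, continuity of $\sigma \mapsto \sigma^{\perp}$ together with maximizing $\langle y,y\rangle$ over the unit sphere of $\sigma^{\perp}$ furnishes a neighborhood $V$ of $\sigma_0$ and a constant $\lambda > 0$ with $\langle y, y\rangle \le -\lambda |y|^2$ for every $\sigma \in V$ and every $y \in \sigma^{\perp}$. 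Consequently, any $x \in \BLL_N$ with $Z_x \cap V \ne \emptyset$ satisfies $\lambda |x|^2 \le -\langle x,x\rangle < N$, so $|x| < \sqrt{N/\lambda}$. As $\BLL$ is discrete, only finitely many $x \in \BLL_N$ meet this ball, so only finitely many $Z_x$ meet $V$.

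From local finiteness both conclusions follow at once. A locally finite union of closed sets is closed, so $\bigcup_{x \in \BLL_N} Z_x$ is closed and $\GR{2,\BLLR}^{\circ}$ is open. For nonemptiness I would argue density: given any $\sigma_0$ and a neighborhood $V$ as above meeting only $Z_{x_1}, \ldots, Z_{x_m}$, the set $V \setminus (Z_{x_1} \cup \cdots \cup Z_{x_m})$ is a nonempty open subset, since a finite union of nowhere dense sets cannot exhaust the open set $V$, and it is contained in $\GR{2,\BLLR}^{\circ}$. Hence $\GR{2,\BLLR}^{\circ}$ is dense in $\GR{2,\BLLR}$, and in particular nonempty. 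The only degenerate cases to treat separately are $n \le 2$, where $\BLL_N = \emptyset$ and the statement is trivial, and the remark that $\GR{2,\BLLR}$ is itself nonempty, which holds because $\BLL$ has signature $(2, n-2)$.
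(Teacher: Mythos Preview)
Your proof is correct and follows essentially the same approach as the paper: both arguments establish that over any small neighborhood of a positive $2$-plane $\sigma_0$, only finitely many $x \in \BLL_N$ can satisfy $x \in \sigma^{\perp}$, by exploiting the negative definiteness of $\sigma^{\perp}$ together with the bound $\langle x,x\rangle > -N$. The paper packages this as compactness of the incidence set $U_{\sigma_0} \subset V_{\sigma_0}\times\BLLR$ (whose fibers are balls), while you make the uniform estimate $\langle y,y\rangle \le -\lambda|y|^2$ explicit; your route also yields density rather than mere nonemptiness, but the core idea is identical.
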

\begin{proof}
 If we choose two positive vectors $v,w$ 
 in $\BLLR \setminus \bigcup_{x \in \BLL_N}x^{\perp}$,
 the positive $2$-plane $\sigma$ spanned by $v$ and $w$ is an element of 
 $\GR{2,\BLLR}^{\circ}$ and $\GR{2,\BLLR}^{\circ}$ is nonempty.
 Let $\sigma_0$ be an element of $\GR{2,\BLLR}^{\circ}$ and $V_{\sigma_0}$
 a small closed neighborhood of $\sigma_0$ in $\GR{2,\BLLR}$ with respect to
 an Euclidean topology of $\GR{2,\BLLR}$. We define the subset $U_{\sigma_0}$ of 
 $V_{\sigma_0}\times \BLLR$ by
\[
 U_{\sigma_0} := 
 \{
 (\sigma, x) \in V_{\sigma_0}\times \BLLR | 
-N \leq \langle x,x \rangle \leq 0, x \in \sigma^{\perp}
 \}
\]
 We denote by $p_1$ the first projection $U_{\sigma_0} \to V_{\sigma_0}$
 and by $p_2$ the second projection $U_{\sigma_0} \to \BLLR$.
 Then 
\[
 p_2 (U_{\sigma_0}) \supset \bigcup_{\sigma \in V_{\sigma_0}} (\sigma^{\perp} \cap \BLL_N)
\]
 Since the index of the bilinear form on $\BLLR$ is $(2,\dim \BLLR - 2)$, 
 each fibre of $p_1$ is homeomorphic to a ball of dimension $\dim \BLLR - 2$.
 By definition, $V_{\sigma_0}$ is compact and
 hence $U_{\sigma_0}$ is also compact.
 This implies that the set
\[
  \bigcup_{\sigma \in V_{\sigma_0}} (\sigma^{\perp} \cap \BLL_N)
\]
 has only finite elements. Hence $V_{\sigma_0} \cap \GR{2, \BLLR}^{\circ}$ is
 an open subset of $V_{\sigma_0}$ and we are done.
\end{proof}
\begin{corollary}\label{density_of_period_2}
 Let $\Lambda$ be a lattice whose index is $(3,\RK{\Lambda} -3)$. We fix a positive
 integer $N$. Assume that $\Lambda$ has an isotropic element $\ell$.
 Let $\Lambda_N$ be the subset of $\Lambda$ defined by
$$
 \Lambda_N :=
\{
 x \in \Lambda | -N < \langle x,x \rangle < 0
\}.
$$
 We denote by $\GR{2,\ell^{\perp}}$ the open set of Grassmanian $\operatorname{Gr}(2,\ell^{\perp})$ which
 consists of positive $2$-planes in $\ell^{\perp}$. 
 Then the subset $\GR{2,\ell^{\perp}}^{\circ}$ defined by
$$
 \GR{2, \ell^{\perp}}^{\circ} :=
 \{
 \sigma \in \GR{2,\ell^{\perp}}
 | \forall x \in \Lambda_N \cap \ell^{\perp}, \sigma \not\subset x^{\perp}
 \}
$$
 is open.
\end{corollary}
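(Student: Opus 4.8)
The plan is to deduce the corollary from Lemma~\ref{density_of_period} by passing to the quotient of $\ell^{\perp}$ by the radical of the induced bilinear form. First I would replace $\ell$ by the primitive isotropic vector of which it is a multiple; this changes neither $\ell^{\perp}$ nor the set $\Lambda_N \cap \ell^{\perp}$, and it makes $\ell$ primitive in $\ell^{\perp}\cap\Lambda$ as well. Since $\ell$ is isotropic, the form on $\Lambda$ restricts to a degenerate form on $\ell^{\perp}$ whose radical is exactly $\RR\ell$, so it descends to a nondegenerate form on the quotient space $\BLLR := \ell^{\perp}/\RR\ell$. A standard computation with signatures shows that this induced form has index $(2,\RK{\BLL}-2)$, where $\BLL := (\ell^{\perp}\cap\Lambda)/\ZZ\ell$ is the image lattice, which is free of rank $\RK{\Lambda}-2$ because $\ell$ is primitive. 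Thus $\BLL$ meets the hypotheses of Lemma~\ref{density_of_period}.

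Next I would compare the two Grassmannians through the projection $\pi : \ell^{\perp} \to \BLLR$. A positive $2$-plane $\sigma$ in $\ell^{\perp}$ cannot meet the isotropic line $\RR\ell$ nontrivially, so $\pi|_{\sigma}$ is injective and an isometry onto its image $\pi(\sigma)$, which is therefore a positive $2$-plane in $\BLLR$. The assignment $\sigma \mapsto \pi(\sigma)$ is defined and continuous on the open locus of $\operatorname{Gr}(2,\ell^{\perp})$ consisting of planes transverse to $\RR\ell$, hence in particular on $\GR{2,\ell^{\perp}}$, and it carries $\GR{2,\ell^{\perp}}$ into $\GR{2,\BLLR}$. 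The two identities I would record are that the descended form satisfies $\langle x,x\rangle = \langle \pi(x),\pi(x)\rangle$ and $\langle v,x\rangle = \langle \pi(v),\pi(x)\rangle$ for all $v,x\in\ell^{\perp}$. The first identity, combined with the strict inequality $\langle x,x\rangle<0$ in the definition of $\Lambda_N$, shows that no element of $\Lambda_N\cap\ell^{\perp}$ lies in $\RR\ell$, so $\pi$ maps $\Lambda_N\cap\ell^{\perp}$ onto exactly $\BLL_N$; the second identity shows that $\sigma\subset x^{\perp}$ if and only if $\pi(\sigma)\subset\pi(x)^{\perp}$.

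Combining these, $\sigma\in\GR{2,\ell^{\perp}}^{\circ}$ if and only if $\pi(\sigma)\in\GR{2,\BLLR}^{\circ}$, where the latter set is the one attached to the lattice $\BLL$. In other words $\GR{2,\ell^{\perp}}^{\circ}$ is the preimage of $\GR{2,\BLLR}^{\circ}$ under the continuous map $\sigma\mapsto\pi(\sigma)$; since $\GR{2,\BLLR}^{\circ}$ is open by Lemma~\ref{density_of_period}, its preimage is open and we are done. The point that requires the most care is the degeneracy of the restricted form: one must verify that its radical is precisely $\RR\ell$, that every positive $2$-plane is automatically transverse to this radical so that $\pi$ induces a genuine continuous map on $\GR{2,\ell^{\perp}}$, and that the \emph{strict} negativity in $\Lambda_N$ keeps every relevant class away from $\RR\ell$, so that the index set $\BLL_N$ is reproduced exactly rather than merely contained.
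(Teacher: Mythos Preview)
Your argument is correct and follows essentially the same route as the paper: pass to the quotient lattice $\BLL = (\ell^{\perp}\cap\Lambda)/\ZZ\ell$, observe that the projection $\pi:\ell^{\perp}\to\BLLR$ respects the bilinear forms and hence induces a continuous map $\pi':\GR{2,\ell^{\perp}}\to\GR{2,\BLLR}$, check that $(\pi')^{-1}(\GR{2,\BLLR}^{\circ})=\GR{2,\ell^{\perp}}^{\circ}$, and invoke Lemma~\ref{density_of_period}. Your write-up supplies several details the paper leaves implicit (primitivity of $\ell$, the signature of $\BLL$, transversality of positive $2$-planes to $\RR\ell$, and the exact match $\pi(\Lambda_N\cap\ell^{\perp})=\BLL_N$), but the strategy is the same.
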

\begin{proof}
 We denote by $\BLL$ the quotient lattice $\Lambda \cap \ell^{\perp}/\ZZ \ell$.
 The symbols $\BLLR$, $\BLL_{N}$, $\GR{2,\BLLR}$ and $\GR{2,\BLLR}^{\circ}$ represent
 the same objects in Lemma \ref{density_of_period}.
 Let us consider the projection $\pi : \ell^{\perp} \to \BLLR = \ell^{\perp}/\RR \ell$.
 Since $\pi$ respect the bilinear forms on $\ell^{\perp}$ and $\BLLR$,
 we have the induced morphism $\pi' : \GR{2,\ell^{\perp}} \to \GR{2,\BLLR}$. 
 By definition, $(\pi')^{-1}(\GR{2,\BLLR}^{\circ}) = \GR{2,\ell^{\perp}}^{\circ}$.
 By Lemma \ref{density_of_period}, $\GR{2,\BLLR}^{\circ}$ is open and we are done.
\end{proof}
 We recall the definition of 
 marked irreducible symplectic manifolds, their moduli and the global 
period map.
\begin{definition}\label{marked_irreducible}
 Let $\Lambda$ be a lattice whose index is $(3, \RK{\LL} - 3)$.
 A marked irreducible symplectic manifold $(X,\varphi)$ 
 is a pair of an irreducible symplectic manifold $X$
 and an isometry $\varphi : H^2 (X,\ZZ) \to \Lambda$.
 Two marked irreducible symplectic manifold $(X,\varphi)$ and $(X',\varphi')$
 are isomorphic if there exists an isomorphism $\Phi : X \cong X'$
 such that $\varphi' = \varphi \circ \Phi^*$, where
 $\Phi^*$ is the induced isometry $H^2 (X',\ZZ) \to H^2 (X,\ZZ)$.
 A moduli space of marked irreducible symplectic manifold $\MF{\Lambda}$
 is the set of isomorphic classes of marked irreducible symplectic manifolds.
 We define the global period map $\PF : \MF{\LL} \to \PP{\LLC} $ by
$$
 \PF : \MF{\Lambda}\ni (X,\varphi) \to \varphi (H^{2,0}(X)) \in \PP{\LLC},
$$
 where $\LLC = \Lambda\otimes_{\ZZ}\CC$.
\end{definition}
 The following lemma is well-known for specialist and we add it for readers convenience.
\begin{lemma}[{\cite[(1.18)]{MR1664696}}]\label{moduli_space_is_complex_manifold}
 The symbols $(X,\varphi)$, $\LL$, $\MF{\LL}$ and $\PF$ represent the same objects
 in Definition \ref{marked_irreducible}. We define the subset of $\PP{\LL}$ by
$$
 \PD{\LL} := \{
 x \in \PP{\LLC} | \langle x,x \rangle = 0, \langle x, \bar{x} \rangle > 0
\}
$$
 Then $\MF{\LL}$ is a complex manifold and $\PF$ is a holomorphic morphism.
 The image of $\PF$ is contained in an open set of $\PD{\LL}$.
\end{lemma}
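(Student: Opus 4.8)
The plan is to obtain all three assertions from the local deformation theory of irreducible symplectic manifolds combined with Beauville's local Torelli theorem, following \cite[(1.18)]{MR1664696}. First I would fix a marked pair $(X,\varphi)$ and pass to the Kuranishi family $\pi : \XF \to \DEF{X}$. By the unobstructedness of deformations of irreducible symplectic manifolds (the Bogomolov-Tian-Todorov theorem), the base $\DEF{X}$ is smooth, and using the holomorphic symplectic form to identify $H^1(X,T_X)$ with $H^{1,1}(X)$ its dimension equals $\RK{\LL}-2$. Since $\DEF{X}$ is contractible the local system $R^2\pi_*\ZZ$ is trivial, so the marking $\varphi$ extends by parallel transport to a canonical family of isometries $H^2(\XF_t,\ZZ)\to\LL$; this yields a local period map $p : \DEF{X}\to \PD{\LL}$ sending $t$ to $\varphi_t(H^{2,0}(\XF_t))$. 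By \cite[Th\'{e}or\`{e}me 5]{Beauville} the map $p$ is a local isomorphism onto an open subset of $\PD{\LL}$, which also has dimension $\RK{\LL}-2$.

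Next I would glue these Kuranishi charts to define the complex structure on $\MF{\LL}$. The universal property of the Kuranishi family shows that two marked pairs that are close in moduli arise from a common Kuranishi family, and the marking renders the resulting identification canonical; transporting the complex structure from $\DEF{X}$ through the local isomorphism $p$ is therefore independent of choices on overlaps. This equips $\MF{\LL}$ with the structure of a (in general non-Hausdorff) complex manifold for which $\PF$ restricts locally to the biholomorphism $p$, so $\PF$ is holomorphic. Alternatively, holomorphicity of $\PF$ is a direct consequence of Griffiths' theorem that the period map of any holomorphic family of compact K\"ahler manifolds is holomorphic.

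For the last assertion, the period of $(X,\varphi)$ is $\PF(X,\varphi)=[\varphi(\sigma)]$, where $\sigma$ is the holomorphic symplectic form. The Beauville-Bogomolov form satisfies $\langle\sigma,\sigma\rangle=0$ and $\langle\sigma,\bar\sigma\rangle>0$, which are the isotropy and positivity properties defining $q_X$ on $H^{2,0}$, so $\PF(X,\varphi)$ lies in $\PD{\LL}$; and since each local period map $p$ is an open embedding, the image of $\PF$ is open in $\PD{\LL}$.

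The only delicate point I expect is the gluing: one must verify that the charts $p$ are mutually compatible so that the complex structure on $\MF{\LL}$ is well defined, which is exactly where the universal property of the Kuranishi family and the canonicity supplied by the markings have to be used with care, and which is the source of the possible non-Hausdorffness of $\MF{\LL}$. The remaining ingredients, namely unobstructedness, the local Torelli isomorphism, and the sign computation for $q_X$ on $H^{2,0}$, are standard.
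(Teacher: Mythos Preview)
Your proposal is correct and follows essentially the same approach as the paper: take the Kuranishi family, invoke \cite[Th\'{e}or\`{e}me 5]{Beauville} to get a local period map that is a local isomorphism onto $\PD{\LL}$, glue these local charts via the universal property of the Kuranishi family to obtain the complex structure on $\MF{\LL}$ and the holomorphicity of $\PF$, and conclude openness of the image from the local isomorphism. You supply more detail (unobstructedness, the dimension count, the sign computation for $q_X$, the remark on non-Hausdorffness) than the paper's proof, which is terse, but the logical skeleton is the same.
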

\begin{proof}
 Let $\XF \to \DEF{X}$ be the Kuranishi family of $X$.
 By \cite[Th\'{e}or\`{e}me 5]{Beauville}, we have a holomorphic morphism
 $p_X : \DEF{X} \to \PD{\LL}$. If $(X',\varphi')$ is another marked irreducible
 symplectic manifold which is isomorphic to $(X,\varphi)$ 
 in the sense of Definition \ref{moduli_space_is_complex_manifold},
 $p_{X'} : \DEF{X'} \to \PD{\LL}$ can be patched $p_X$ by
 universality of the Kuranishi space. Hence
 $\MF{\Lambda}$ carries a structure of a complex manifold
 and $\PF$ is holomorphic. Since $p_X$ is locally isomorphic,
 the image of $\PF$ is an open set of $\PD{\LL}$.
\end{proof}
 We prove a property of fibres of the global period map.
\begin{lemma}\label{fibre_property}
 The symbols $(X,\varphi)$, $\MF{\Lambda}$, $\PF$ and $\LL$ represent the same objects in 
 Definition \ref{marked_irreducible}. Let $\MF{\Lambda}^{\circ}$ be a connected component of $\MF{\Lambda}$
 which contains $(X,\varphi)$. We denote by $t$ the point $\PF (X,\varphi)$.
 Assume that $\Lambda \cap t^{\perp}$ has an isotropic element $\ell$. Then
 there exists a marked irreducible symplectic manifold $(X',\varphi')$ such that
 $(X',\varphi') \in \MF{\LL}^{\circ}$,
 $X'$ carries a nef line bundle $L'$ with $\varphi'(c_1 (L')) = \ell$ and $\PF (X',\varphi') = t$.
\end{lemma}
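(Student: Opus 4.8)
The plan is to work entirely inside the fibre $\PF^{-1}(t)$, where no complex structure is actually being deformed: the period $t$ fixes the Hodge structure, so $\NS{X}=\Lambda\cap t^{\perp}$ for \emph{every} marked pair over $t$, and this lattice is hyperbolic of signature $(1,\operatorname{rank}(\Lambda\cap t^{\perp})-1)$ and contains $\ell$. Note that $X$ itself lies over $t$, so I may take $X$ as a starting model, fix a K\"ahler class $\kappa$ on it, and (replacing $\ell$ by $-\ell$ if necessary) assume $q_X(\ell,\kappa)>0$, i.e. $\ell\in\overline{\POSNS{X}}$. What distinguishes the points of $\PF^{-1}(t)\cap\MF{\Lambda}^{\circ}$ is only their nef cone: by \cite[Theorem 1.19]{Amerik2015} each nef cone is the closure of a connected component (a \emph{chamber}) of $\POSNS{X}\setminus\bigcup_{e\in\MBM{X}}e^{\perp}$, and by Huybrechts' surjectivity of the period map \cite{MR1664696} together with Markman's description of the fibres \cite{Markman_2011} every such chamber is realised as the nef cone of some $(X',\varphi')$ lying over $t$ in the same component. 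Thus the whole problem reduces to exhibiting a chamber $K$ with $\ell\in\overline{K}$: the corresponding $(X',\varphi')$ then satisfies $\ell\in\varphi'(\NEF{X'})$, and since $\ell$ is integral it equals $\varphi'(c_1(L'))$ for a nef line bundle $L'$.

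To produce such a chamber I would follow the straight segment
\[
 \gamma(s)=(1-s)\kappa+s\ell,\qquad s\in[0,1),
\]
which stays in the open convex cone $\POSNS{X}$. The chamber containing the tail of $\gamma$ (for $s$ close to $1$) is the candidate $K$, and then $\ell=\lim_{s\to1}\gamma(s)\in\overline{K}$, provided $\gamma$ meets only finitely many walls $e^{\perp}$.

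The main obstacle is exactly this finiteness, because infinitely many MBM walls accumulate at the isotropic boundary point $\ell$, so a priori the segment could cross infinitely many of them. I would resolve it using the boundedness of MBM classes \cite{Amerik2015}: there is a constant $C$ with $-C\le q_X(e,e)<0$ for all $e\in\MBM{X}$. Orienting each $e$ so that $q_X(e,\kappa)>0$ (possible since $\kappa$ avoids all walls), the wall $e^{\perp}$ is crossed in the interior of $\gamma$ exactly when $q_X(e,\ell)<0$; and because $e$ and $\ell$ are integral, $q_X(e,\ell)\le-1$. Projecting $e$ onto the hyperbolic plane spanned by $\kappa$ and $\ell$, the square bound forces the $\kappa$-coefficient of $e$ to be bounded, while $q_X(e,\ell)\le-1$ keeps that coefficient bounded away from $0$; together these bound $q_X(e,\kappa)$ from above by a constant depending only on $C,\kappa,\ell$. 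Hence every crossing class lies in a region $\{\,e:-C\le q_X(e,e)\le-1,\ 0<q_X(e,\kappa)\le M\,\}$, which is a bounded — hence finite — subset of the hyperbolic lattice $\Lambda\cap t^{\perp}$. Therefore only finitely many walls are crossed, and $\ell$ lies in the closure of the terminal chamber $K$.

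Finally I would attend to the bookkeeping. Each wall-crossing along $\gamma$ is a bimeromorphic modification (or a monodromy reflection) that preserves the period $t$ and stays inside $\MF{\Lambda}^{\circ}$, so the terminal model $(X',\varphi')$ genuinely lies over $t$ in the prescribed component; this is precisely where I rely on the chamber/period-fibre dictionary of \cite[Theorem 1.19]{Amerik2015} and \cite{Markman_2011} rather than on any explicit construction, mirroring the identification of $g(\AMP{X})$ with a component of $\POSNS{X}\setminus\bigcup_{e\in\MBM{X}}e^{\perp}$ used in the proof of Theorem \ref{main_1}. The only genuinely delicate point is the finiteness of crossed walls discussed above; everything else is formal.
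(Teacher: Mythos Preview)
Your argument is essentially the paper's: both locate a K\"ahler-type chamber $C$ of $\POS{X}\setminus\bigcup_{e\in\MBM{X}}e^{\perp}$ with $\varphi^{-1}(\ell)\in\bar C$ and then invoke the chamber/birational-model dictionary (the paper citing \cite[Theorem~6.2]{Amerik2015} directly to obtain $X'$, a bimeromorphic map $f$, and a Hodge monodromy $\gamma$ with $\gamma\circ f^{*}(\KEH{X'})=C$; you phrasing it via Markman's description of period fibres) to produce $(X',\varphi')$ in $\MF{\LL}^{\circ}$ over $t$ with $L'$ nef. The one substantive difference is your segment-and-wall-counting step showing that such a chamber actually exists: the paper simply asserts the choice of $C$, whereas local finiteness of MBM walls is only guaranteed in the interior $\POS{X}$ and does not by itself rule out accumulation at the isotropic boundary point $\ell$, so your use of the self-intersection bound on $\MBM{X}$ to force finiteness of the crossed walls is a genuine (and correct) justification that the paper's proof leaves implicit.
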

\begin{proof}
 Let $\POS{X}$ be the positive cone of $H^{1,1}(X,\RR)$
 and $\MBM{X}$ the set of Monodromy birationally minimal classes.
 We choose a connected component $C$ of $\POS{X}\setminus \bigcup_{e \in \MBM{X}}e^{\perp}$
 such that $\varphi^{-1}(\ell)$ is contained in $\bar{C}$, where $\bar{C}$ is the closure
 of $C$.
 We define the subgroup $\MONHDG{X}$ by
$$
 \MONHDG{X} := \{
 \gamma \in \MON{X} | \gamma (H^{2,0}(X)) = H^{2,0}(X)
\}
$$
 By \cite[Definition 6.1 and Theorem 6.2]{Amerik2015},
 there exists an irreducible symplectic manifold $X'$, a bimeromorphic map
 $f : X \dasharrow X'$ and an element $\gamma$ of $\MONHDG{X}$
 such that $\gamma \circ f^{*}(\mathcal{K}(X')) = C$,
 where $f^*$ is the induced morphism $H^2 (X',\RR) \to H^2 (X,\RR)$ and $\mathscr{K}(X')$ is 
 the K\"{a}hler cone of $X'$.
 By definition of $\MON{X}$, $(X,\varphi)$ and
 $(X,\varphi \circ \gamma)$ belong to a same connected component of $\MF{\LL}$. 
 By \cite[Theorem 2.5]{MR1992275},
 $(X,\varphi \circ \gamma)$ and $(X',\varphi \circ \gamma \circ f^{*})$ belong to a same connected
 component of $\MF{\LL}$.
 Hence if we define $\varphi' = \varphi \circ \gamma \circ f^*$,
 $(X',\varphi')$ belongs to $\MF{X}^{\circ}$. Since $\gamma \circ f^{*}(H^{2,0}(X')) = H^{2,0}(X)$,
 $\PF (X',\varphi') = t$.
 We finish the proof of Lemma if we have proved that $X'$ carries a line bundle $L'$ 
 such that $L'$ is nef and $c_1 (L') = (\varphi')^{-1}(\ell)$.
 Since $(\varphi')^{-1}(\ell) \in H^{1,1}(X',\RR)\cap H^2 (X',\ZZ)$, 
 there exists a line bundle $L'$ on $X'$ such that $c_1 (L') = (\varphi')^{-1}(\ell)$.
 By definition, $\varphi^{-1}(\ell) \in \bar{C}$. Thus
 $(\varphi')^{-1}(\ell)$ is contained in the closure of $\mathcal{K}(X')$.
 Hence $L'$ is nef and we are done.
\end{proof}
The following Proposition is the punch line of the proof of 
Theorem \ref{main_3}.
\begin{prop}\label{Nice_Period_Model}
 Let $X$ be an irreducible symplectic manifold whose Betti number is greater 
 than five and $L$ a line bundle on $X$ with $q_X(c_1 (L))= 0$, where
 $q_X$ is 
 the Beauville-Bogomolov quadratic form.
 Then there exists an irreducible symplectic manifold $X'$ and a line bundle $L'$
 which has the following four properties:
\begin{itemize}
 \item[(1)] The pairs $(X,L)$ and $(X',L')$ are deformation equivalent
            in the sense of Definition \ref{Deformation_Equivalent};
 \item[(2)] The line bundle $L'$ is nef;
 \item[(3)] The Picard number of $X'$ is equal to $\dim H^2 (X' ,\RR) - 2$ and;
 \item[(4)] The intersection  $c_1 (L')^{\perp} \cap \MBM{X'} $ is empty.
\end{itemize}
\end{prop}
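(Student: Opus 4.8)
The plan is to realize $(X',L')$ as a very general member of the connected moduli component of $(X,L)$ along the locus where $c_1(L)$ stays of Hodge type $(1,1)$, choosing the period so that its transcendental plane is a \emph{rational} positive $2$-plane lying in the generic set produced by Corollary \ref{density_of_period_2}. Fix a marking $\varphi : H^2(X,\ZZ) \to \Lambda$; then $\Lambda$ has index $(3,\RK{\Lambda}-3)$ and $\ell := \varphi(c_1(L))$ is isotropic, so the hypotheses of Corollary \ref{density_of_period_2} are met once a bound $N$ is fixed. For that bound I would invoke boundedness of MBM classes \cite{Amerik2015}: there is an $N$, depending only on the deformation type, such that every MBM class $e$ of every deformation of $X$ satisfies $-N < \langle e,e\rangle < 0$; after identifying cohomology with $\Lambda$ via a marking this reads $\MBM{\cdot} \subset \Lambda_N$.

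With this $N$, Corollary \ref{density_of_period_2} gives that $\GR{2,\ell^{\perp}}^{\circ}$ is open, and the construction in Lemma \ref{density_of_period} shows it is nonempty; since the form and $\ell^{\perp}$ are rational, rational positive $2$-planes are dense in it, so I may pick a rational positive $2$-plane $\sigma \in \GR{2,\ell^{\perp}}^{\circ}$. Orienting $\sigma$ determines a period point $t \in \PD{\Lambda} \cap \ell^{\perp}$ whose associated positive plane (the real span of $\Re t$ and $\Im t$) equals $\sigma$. Because $\sigma$ is rational of signature $(2,0)$, the lattice $\sigma^{\perp}\cap\Lambda$ has rank $\RK{\Lambda}-2$ and signature $(1,\RK{\Lambda}-3)$; this will be the N\'eron--Severi lattice of the target manifold, giving property (3). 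Note $\ell\in\sigma^{\perp}$ by the choice $\sigma\subset\ell^{\perp}$, so $\ell$ remains algebraic at $t$.

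Next I would reach a manifold with period $t$ without losing the line bundle. By surjectivity of the period map on each connected component \cite{MR1664696}, the component $\MF{\Lambda}^{\circ}$ containing $(X,\varphi)$ contains a marked pair of period $t$. Connecting $t_0 := \PF(X,\varphi)$ to $t$ by a path inside the sub-period-domain $\PD{\Lambda}\cap\ell^{\perp}$ (choosing the orientation of $\sigma$ so that $t$ and $t_0$ lie in the same one of its two components) and lifting through the locally isomorphic period map of Lemma \ref{moduli_space_is_complex_manifold}, I obtain a path in $\MF{\Lambda}^{\circ}$ along which $\varphi^{-1}(\ell)$ is everywhere of type $(1,1)$. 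Patching the Kuranishi families over this path yields a smooth family whose everywhere-$(1,1)$ class $\ell$ is represented by a line bundle on the total space, so $(X,L)$ is deformation equivalent to a pair $(X_1,L_1)$ with $\PF(X_1,\varphi_1)=t$ and $\varphi_1(c_1(L_1))=\ell$. Finally Lemma \ref{fibre_property}, applied to $(X_1,\varphi_1)$, produces $(X',\varphi')\in\MF{\Lambda}^{\circ}$ of the same period $t$ carrying a \emph{nef} $L'$ with $\varphi'(c_1(L'))=\ell$; as the two are related by a bimeromorphic map and lie in one component, $(X_1,L_1)$ and $(X',L')$ are deformation equivalent by \cite[Theorem 2.5]{MR1992275}, giving (1) and (2).

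It remains to check (4). Suppose $e\in c_1(L')^{\perp}\cap\MBM{X'}$. Then $e\in\NS{X'}=\sigma^{\perp}\cap\Lambda$, so $\sigma\subset e^{\perp}$; by boundedness $e\in\Lambda_N$, and $e\in\ell^{\perp}$ since $e\perp c_1(L')$. Thus $e\in\Lambda_N\cap\ell^{\perp}$ with $\sigma\subset e^{\perp}$, contradicting $\sigma\in\GR{2,\ell^{\perp}}^{\circ}$; hence $c_1(L')^{\perp}\cap\MBM{X'}=\emptyset$. The delicate point is the third paragraph: deforming the \emph{pair} requires that $t$ be joined to $t_0$ inside a single connected component of $\PD{\Lambda}\cap\ell^{\perp}$ and that this path lift to the moduli component while keeping $\ell$ of Hodge type $(1,1)$, so that $L$ propagates as a genuine line bundle. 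This is where the connectedness and orientation bookkeeping of the type-$(2,\RK{\Lambda}-3)$ period domain, together with the local structure from Lemma \ref{moduli_space_is_complex_manifold}, must be handled with care.
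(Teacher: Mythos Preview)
Your overall strategy matches the paper's: bound the MBM squares by some $N$, pick a period in $\ell^{\perp}$ that is simultaneously rational (so the Picard rank is $\RK{\Lambda}-2$) and lies in the open locus avoiding $\Lambda_N\cap\ell^{\perp}$, then invoke Lemma~\ref{fibre_property} to make the isotropic class nef. Property~(4) is checked exactly as you do.

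There is, however, a genuine gap in your last step. The bimeromorphic map $f:X_1\dashrightarrow X'$ produced in the proof of Lemma~\ref{fibre_property} comes together with a Hodge monodromy $\gamma$, and the new marking is $\varphi'=\varphi_1\circ\gamma\circ f^{*}$. Unwinding $\varphi'(c_1(L'))=\ell=\varphi_1(c_1(L_1))$ gives
\[
f^{*}c_1(L')=\gamma^{-1}c_1(L_1),
\]
which is \emph{not} $c_1(L_1)$ in general. Hence Huybrechts' theorem \cite[Theorem~2.5]{MR1992275} only yields that $(X_1,\gamma^{-1}L_1)$ and $(X',L')$ are deformation equivalent as pairs; you still owe an argument that $(X_1,L_1)$ and $(X_1,\gamma^{-1}L_1)$ are deformation equivalent, and a loop realising $\gamma$ need not keep $c_1(L_1)$ of type $(1,1)$ throughout. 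So your invocation of \cite{MR1992275} does not close the argument.

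The paper sidesteps both this issue and the path-lifting you flag as ``delicate''. Instead of lifting a global path, it simply observes that the two relevant subsets of $\PD{\Lambda,\ell^{\perp}}$ (the dense ``maximal Picard'' locus and the open locus from Corollary~\ref{density_of_period_2}) already meet the image of the local Kuranishi family $\DEF{X,L}$, so one can take $t_0$ there directly; no global lifting is needed. For the deformation equivalence of $(X,L)$ and $(X',L')$, the paper then compares the Kuranishi families $\DEF{X,L}$ and $\DEF{X',L'}$: their period images overlap near $t_0$, and at a point $t_1$ of the overlap with $t_1^{\perp}\cap\Lambda=\ZZ\ell$ the two fibres, lying in the same component with the same period and Picard rank one, are isomorphic by \cite[Theorem~2.2(5)]{Markman_2011}. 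The isomorphism then matches the restricted line bundles since both have first Chern class $\varphi_{t_1}^{-1}(\ell)$. This replaces your appeal to \cite{MR1992275} and avoids the monodromy twist entirely.
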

\begin{proof}
Let $\LL$ be a lattice isomorphic to $(H^2 (X,\ZZ), q_X)$, where $q_X$ is the
Beauville-Bogomolov quadratic form. The symbols $\MF{\LL}$ and $\PF$
represents the same objects in Definition \ref{marked_irreducible}.
We put $\ell = \varphi (c_1 (L))$.
Let $\PD{\LL,\ell^{\perp}}$ be a subset of $\PD{\LL}$ defined by
$$
 \PD{\LL,\ell^{\perp}} := \{ x \in \PD{\LL} | \langle x,\ell \rangle = 0
\}.
$$
We will define two subsets 
of $\PD{\LL , \ell^{\perp}}$. The first one is defined by
$$
 \PD{\LL, \ell^{\perp}}^{\mathrm{max}} 
:= \{x \in \PD{\LL, \ell^{\perp}} |\RK{x^{\perp}\cap \LL} = \RK{\LL} - 2 \} 
$$
\begin{claim}\label{density_of_higher_picard_number}
 The subset $\PD{\LL,\ell^{\circ}}^{\mathrm{max}}$ is dense.
\end{claim}
\begin{proof}
 We choose an element $t$ of $\PD{\LL,\ell^{\perp}}$. 
 There exist sequences $a_m$ and $b_m$ in $\LL_{\QQ}$ such that
$$
 \lim_{m \to \infty} a_m = \mathrm{Re}(t), \; \lim_{m \to \infty} b_m = \mathrm{Im}(t)
$$
 Since $t \in \PD{\LL, \ell^{\perp}}$, 
 $\langle \mathrm{Re}(t),\mathrm{Re}(t) \rangle > 0$,
 $\langle \mathrm{Im}(t),\mathrm{Im}(t) \rangle > 0$ 
 and
 $\langle \mathrm{Re}(t),\mathrm{Im}(t) \rangle = 0$.
 Hence we may assume that $\langle a_m , a_m \rangle > 0$ and $\langle b_m , b_m \rangle > 0$
 for all $m$. Moreover we may assume that $\lim_{m \to \infty} \langle a_m , b_m \rangle = 0$.
 We define other sequences $c_m$ and $d_m$ in $\LL_{\QQ}$ by
\begin{eqnarray*}
 c_m &=& b_m - \frac{\langle a_m,b_m\rangle}{\langle a_m,a_m \rangle}a_m \\
 d_m &=& \left(\sqrt{\frac{\langle a_m , a_m \rangle}{\langle c_m, c_m \rangle}}\right)c_m
\end{eqnarray*}
 Then $a_m + \sqrt{-1}d_m \in \PD{\LL,\ell^{\perp}}^{\mathrm{max}}$.
 By definition, $\lim_{m \to \infty} a_m + \sqrt{-1}d_m = t$ and we are done.
\end{proof}
Let $N$ be a positive integer and  $\LL_N$ represents the same object in 
Corollary \ref{density_of_period_2}. 
We define the second subset of $\PD{\LL, \ell^{\perp}}$ by
$$
 \PD{\LL, \ell^{\perp}}^{\circ} 
:= \{x \in \PD{\LL, \ell^{\perp}} | x^{\perp} \cap \ell^{\perp} \cap \LL_N = \emptyset\}.
$$
\begin{claim}\label{open_and_dense}
The subset $\PD{\LL,\ell^{\perp}}^{\circ}$ is open and dense.
\end{claim}
\begin{proof}
 For a very general point $x$ of $\in \PD{\LL, \ell^{\perp}}$, 
$x^{\perp} \cap \LL = \ZZ \ell$. Hence $x \in \PD{\LL , \ell^{\perp}}^{\circ}$
 and $\PD{\LL, \ell^{\perp}}^{\circ}$ is dense.
We have a natural identification
$$
 \PD{\LL,\ell^{\perp}} \cong \GR{2,\ell^{\perp}}
$$
where $\GR{2,\ell^{\perp}}$ is the set of positive $2$-planes 
in $\ell^{\perp}$. The correspondence is given
by
$$
 \PD{\LL , \ell^{\perp}} 
\ni t \mapsto \langle \mathrm{Re}(t), \mathrm{Im}(t) \rangle \in \GR{2,\ell^{\perp}},
$$
where $\langle \mathrm{Re}(t), \mathrm{Im}(t) \rangle$ is the $2$-plane spanned
by $\mathrm{Re}(t)$ and $\mathrm{Im}(t)$.
Under this identification, $\PD{\LL, \ell^{\perp}}^{\circ}$ corresponds to
$$
\GR{2,\ell^{\perp}}^{\circ} =
 \{ \sigma \in \GR{2,\ell^{\perp}} | \forall x \in \Lambda_N \cap \ell^{\perp}, 
\sigma \not\subset x^{\perp}\}
$$
By Corollary \ref{density_of_period_2}, the above set is open in $\GR{2,\ell^{\perp}}$
and we are done.
\end{proof}
Let $\pi :\XF \to \DEF{X}$ be a Kuranishi family of $X$. 
For a point $t$ of $\DEF{X}$, $\pi$ gives a natural marking $\varphi_t : H^2 (\XF_t , \ZZ) \to \LL$, where
$\XF_t$ is the fibre at $t$.
We consider
the subset of $\DEF{X}$ defined by
$$
 \DEF{X,L} := \{ t \in \DEF{X} | \PF (\XF_t , \varphi_t ) \in \PD{\LL, \ell^{\perp}} \}
$$
and the restriction family $\XF_L \to \DEF{X,L}$. By \cite[Corollaire 1]{MR785234},
$\XF_L$ carries a line bundle $\LF$ such that the restriction of $\LF$ to $X$ 
is isomorphic to $L$. Since the Betti number is greater than five, by 
\cite[Corollary 1.4]{2016arXiv160403927A},
there exists a positive 
integer $N$ such that $\varphi_t (\MBM{X_t}) \subset \LL_N$ for all $t \in \DEF{X}$. 
By Claim \ref{density_of_higher_picard_number}
and Claim \ref{open_and_dense}, there exists a point $t_0$ of $\DEF{X}$ such that
$\PF (\XF_{t_0},\varphi_{t_0}) \in \PD{\LL, \ell^{\perp}}^{\mathrm{max}} \cap \PD{\LL, \ell^{\perp}}^{\circ}$.
By Lemma \ref{fibre_property}, we have a marked irreducible symplectic manifold
$(X',\varphi')$ such that $X'$ carries a nef line bundle $L'$ with $\varphi' (c_1(L')) = \ell$
and $\PF (X',\varphi') = t_0$. Since $\RK{t_0^{\perp} \cap \LL} = \RK{\LL} - 2$, the Picard
number of $X'$ is equal to $\dim H^2 (X',\RR) - 2$. Since $t_0^{\perp}\cap \ell^{\perp} \cap \LL_N = \emptyset$,
$c_1 (L')^{\perp} \cap \MBM{X'} = \emptyset$. Thus we are done if we prove that
$(X,L)$ and $(X',L')$ are deformation equivalent in the sense of Definition \ref{Deformation_Equivalent}.
Let $\pi' : \XF' \to \DEF{X'}$ be a Kuranishi family of $X'$.
We consider the restriction family $\XF'_{L'} \to \DEF{X',L'}$ which is obtained by
the same manner of $\XF_L \to \DEF{X,L}$. Since
$t_0 \in \PF (\DEF{(X,L)}) \cap \PF (\DEF{X',L'})$,
$ \PF (\DEF{(X,L)}) \cap \PF (\DEF{X',L'})$
 is a non empty open subset of $\PD{\LL,\ell^{\perp}}$.
Hence there exists a point $t_1$ of $\PF(\DEF{X,L}) \cap \PF(\DEF{X',L'})$ such that 
$t_1^{\perp} \cap \LL = \ZZ \ell$.
Let  $\XF_{t_1}$ be the fibre of $\XF_L \to \DEF{X,L}$ at $t_1$ and 
$\XF'_{t_1}$ the fibre of $\XF'_{L'} \to \DEF{X',L'}$ at $t_1$.
We denote by $\varphi_{t_1}$ the induced marking on $H^2 (\XF_{t_1}, \ZZ)$
 and by $\varphi'_{t_1}$ the induced marking on $H^2 (\XF'_{t_1}, \ZZ)$.
Then $(\XF_{t_1}, \varphi_{t_1})$ and $(\XF'_{t_1}, \varphi'_{t_1})$
are isomorphic
by \cite[Theorem 2.2 (5)]{Markman_2011}. 
We denote by $\Phi_{t_1}$ an isomorphism between 
$( \XF_{t_1},\varphi_{t_1} )$ and $( \XF'_{t_1}, \varphi'_{t_1} )$.
Since $ \varphi_{t_1}^{-1}(\ell) =  c_1 (\LF_{t_1})$ and
$(\varphi'_{t_1})^{-1}(\ell) = c_1 (\LF'_{t_1})$,
$\Phi_{t_1}^*\LF'_{t_1} \cong \LF_{t_1}$.
Hence $(X,L)$ and $(X',L')$ are deformation equivalent
in the sense of Definition \ref{Deformation_Equivalent}.
\end{proof}
\begin{proof}[Proof of Theorem \ref{main_3}.] 
 By Proposition \ref{Nice_Period_Model}, we have a pair $(X',L')$ with deformation equivalent to $(X,L)$
 such that $L'$ is nef
 and $c_1 (L)^{\perp}\cap \MBM{X'} = \emptyset$. By Theorem \ref{main_2}, $\AUT{X',L'}$ is almost 
 abelian whose rank is equal to $\dim H^2 (X',\RR) - 2$.
\end{proof}

\bibliographystyle{plain}
\bibliography{matsushita}
\end{document}